\title {Free cubic implication algebras}	% insert your name here
\author{Colin G.Bailey}
\address{School of Mathematics, Statistics \& Operations Research\\
Victoria University of Wellington\\
PO Box 600\\
Wellington\\
NEW ZEALAND
}
\email{Colin.Bailey@vuw.ac.nz}
\author{Joseph S.Oliveira}
\address{
Pacific Northwest National Laboratories\\
Richland\\
U.S.A.}
\email{Joseph.Oliveira@pnl.gov}
\subjclass{06A12, 06B25}
\keywords{cubes, implication algebras,  free algebras}
\dedicatory{To the memory of our mentor and friend \\
Gian-Carlo Rota}
\date{\today}
\let\Cal\mathcal
\let\rsf\mathscr
\newcommand{\card}[1]{\left| #1\right|}
\def\one{\mathbf1}
\def\caret{\hat{\ }}
\newcommand{\Item}[1]{\Tr[ref=l]{%
\psframebox[linestyle=none]{#1}}}
\def\one{\mathbf1}
\providecommand{\meet}{\mathbin{\wedge}}
\providecommand{\join}{\mathbin{\vee}}
\newcommand{\comp}[1]{\overline{#1}}
     \def\restrict{\hbox{\rm\kern0.166em\accent"12\kern-0.536em$\vert$\kern0.3em}}%
     \def\restrict{\upharpoonright}%
\def\twoSet#1#2{\left\{%
\vphantom{#2}#1\thinspace\right|\nolinebreak[3]\left.%
  #2%
  \vphantom{#1}%
  \right\}%
}
\def\oneSet#1{\left\lbrace#1\right\rbrace}
\newif\if@nstr
\def\setstrfalse{\let\if@nstr=\iffalse}
\def\setstrtrue{\let\if@nstr=\iftrue}
\def\@nstr #1#2{
\def\@@nstr ##1#1##2##3\@@nstr{\ifx
\@nstr ##2\setstrfalse \else \setstrtrue \fi }
\@@nstr #2#1\@nstr \@@nstr}
\def\@separate#1|#2@{\setFront{#1}\setBack{#2}}
\def\lb#1\rb{\@nstr|{#1} \if@nstr \@separate#1 @ \twoSet{\@setFront}{\@setBack}%
\else \@separate |{#1 }@ \oneSet{\@setBack}\fi%
}
\def\setFront#1{\def\@setFront{#1}}
\def\setBack#1{\def\@setBack{#1}}
\def\Set#1{\lb{#1}\rb}
\def\oneBrk#1{\left\langle#1\right\rangle}
\def\twoBrk#1#2{\left\langle%
\vphantom{#2}#1\thinspace\right|\nolinebreak[3]\left.%
  #2%
  \vphantom{#1}%
  \right\rangle%
}
\def\brk<#1>{\@nstr|{#1} \if@nstr \@separate#1 @ \twoBrk{\@setFront}{\@setBack}%
\else \@separate |{#1 }@ \oneBrk{\@setBack}\fi%
}
\def\lemref#1{\normalfont{lemma}~\ref{#1}}
\theoremstyle{plain}
\newtheorem{thm}{Theorem}[section]
\newtheorem{lem}[thm]{Lemma}
\newtheorem{cor}[thm]{Corollary}
\newtheorem{prop}[thm]{Proposition}
\newtheorem{defn}[thm]{Definition}
\theoremstyle{remark}
{}
{\newtheorem{example}{Example}[section]}
{}
{}
\newcommand{\N}{{\mathbbm{N}}}
\newcommand{\N}{{\mathbb{N}}}
\begin{document}
\begin{abstract}
	We construct free cubic implication algebras with finitely many generators, 
	and determine the size of these algebras. 
\end{abstract}
\maketitle
\section{Introduction}
In \cite{MR:cubes} Metropolis and Rota introduced a new way of looking 
at the face lattice of an $n$-cube based on its symmetries. 
Subsequent work has lead to a  purely  equational representation of 
these lattices -- the varieties of MR and cubic implication algebras. 
\cite{BO:UniMR} describes the variety of Metropolis-Rota implication 
algebras (MR algebras). \cite{BO:eq} gives 
an equational description of cubic implication algebras and implicitly proves 
that the face lattices of $n$-cubes generate the variety. 
Therefore free 
cubic implication algebras must exist. In this paper we give an explicit construction 
for the free cubic implication algebra on $m$ generators and determine its size. 

The argument comes in several parts.  First we produce a candidate for 
the free algebra on $k+1$ generators by looking at embeddings into 
interval algebras and choosing a minimal one. Thus the free algebra 
is embedded into a known cubic implication algebra. We compute the 
size of this cubic implication algebra. From \cite{BO:eq} we know that every 
finite cubic implication algebra is a finite union of interval algebras and we 
know that the overlaps are also interval algebras. The size of an 
interval algebra is easy to determine so we can use an 
inclusion-exclusion argument to determine the size of the cubic 
algebra. 

The next part is to show that our candidate for the free algebra is 
generated by the images of the generators and so the embedding is onto. 
Again we use the facts that our cubic implication algebra is a finite union of 
interval algebras and each interval algebra is the set of 
$\Delta$-images of a Boolean algebra to reduce the problem to showing 
that certain atoms in a well-chosen Boolean algebra are generated.

We start by recalling some basic definitions and facts about cubic 
algebras -- the reader is referred to \cites{BO:eq} for more 
details. 
\begin{defn}
	A \emph{cubic implication algebra} is a join semi-lattice with one and a binary 
	operation $\Delta$ satisfying the following axioms:
	\begin{enumerate}[a.]
		\item  if $x\le y$ then $\Delta(y, x)\join x = y$;
		
		\item  if $x\le y\le z$ then $\Delta(z, \Delta(y, x))=\Delta(\Delta(z, 
		y), \Delta(z, x))$;
		
		\item  if $x\le y$ then $\Delta(y, \Delta(y, x))=x$;
		
		\item  if $x\le y\le z$ then $\Delta(z, x)\le \Delta(z, y)$;
		
		\item[] Let $xy=\Delta(\one, \Delta(x\join y, y))\join y$ for any $x$, $y$ 
		in $\mathcal L$. Then:
		
		\item  $(xy)y=x\join y$;
		
		\item  $x(yz)=y(xz)$;
	\end{enumerate}
\end{defn} 

\begin{defn}
	An \emph{MR implication algebra} is a cubic implication algebra satisfying the MR-axiom:\\
	if $a, b<x$ then 
	\begin{gather*}
		\Delta(x, a)\join b<x\text{ iff }a\meet b\text{ does not exist.}
	\end{gather*}
\end{defn}

\begin{example}\label{def:intAlg}
    Let $B$ be a Boolean algebra, then the interval algebra of $B$ is
    $$
    \rsf I(B)=\Set{[a, b] | a\le b \text{ in }B}
    $$
    ordered by inclusion. The operations are defined by
    \begin{align*}
        \one &=[0, 1]\\
        [a, b]\join[c, d]&=[a\meet c, b\join d]\\
        \Delta([a, b], [c, d])&=[a\join(b\meet\comp d), b\meet(a\join\comp c)].
    \end{align*}
    It is straightforward to show $\rsf I(B)$ is an MR-algebra. Additional details
may be found in \cite{BO:eq}.
\end{example}

\begin{example}\label{def:signedSet}
    Let $X$ be any set. The \emph{signed set algebra of $X$} is the 
    set
    $$
    \rsf S(X)=\Set{\brk<A, B> | A, B\subseteq X\text{ and }A\cap 
    B=\emptyset}. 
    $$
    The operations are 
    \begin{align*}
        \one & =\brk<\emptyset, \emptyset>  \\
	\brk<A, B>\join\brk<C, D> & =\brk<A\cap C, B\cap D>  \\
        \Delta(\brk<A, B>, \brk<C, D>) & =\brk<A\cup C\setminus B, 
        B\cup D\setminus A>. 
    \end{align*}
    
    $\rsf S(X)$ is isomorphic to $\rsf I(\wp(X))$ by $\brk<A, B>\mapsto [A, 
X\setminus B]$. All finite MR-algebras are isomorphic to some signed 
set algebra (and hence to some interval algebra).
\end{example}

As part of the representation theory in  \cite{BO:eq} we had the 
following definitions and lemma:
\begin{defn}
	Let $\mathcal L$ be a cubic implication algebra and 
	$a\in\mathcal L$. Then the \emph{localization of $\mathcal L$ 
	at $a$} is the set
	$$
	\mathcal L_{a}=\Set{\Delta(y, x) | a\le x\le y}.
	$$
\end{defn}

Associated with localization is the binary relation $\preccurlyeq$ 
we can define by
$$
a\preccurlyeq b\text{ iff }b\in\mathcal L_{a}
$$
or by the equivalent internal definitions:
$$
a\preccurlyeq b \text{ iff }a\le \Delta(a\join b, b)
$$
$$
a\preccurlyeq b \text{ iff }b=(a\join b)\meet(\Delta(\one, a)\join b).
$$
In \cite{BO:eq} we establish the equivalence of these three 
definitions and make great use of this relation in getting a 
representation theorem for cubic algebras. The next lemma is the 
 part of that representation theory that we need in order to 
 understand the free algebra construction that follows.

\begin{lem}\label{lem:kl}\label{lem:onto}
	Let $\mathcal L$ be any cubic implication algebra. Then
	$\mathcal L_{a}$ is an atomic MR-algebra, and hence 
	isomorphic to an interval algebra.
\end{lem}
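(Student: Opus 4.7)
The plan is to verify that $\mathcal L_a$ is an atomic MR-algebra; the isomorphism with an interval algebra then follows from the representation theorem alluded to in Example~\ref{def:signedSet} and developed in \cite{BO:eq}. The verification breaks into three stages: $\mathcal L_a$ is a cubic implication subalgebra of $\mathcal L$; it satisfies the MR-axiom; and it is atomic.

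For the subalgebra check I would first note $\one = \Delta(\one, \one) \in \mathcal L_a$ (taking $x = y = \one$; the identity $\Delta(y, y) = y$ follows by combining axioms (a), (c), (d)). For closure under $\join$ and $\Delta$, given elements $\Delta(y_1, x_1), \Delta(y_2, x_2) \in \mathcal L_a$, I would pass to a common top $z \ge y_1 \join y_2$ and use axiom (b) as a rewriting rule to express each element in the form $\Delta(z, \cdot)$; the required closure identities then reduce to manipulations inside the principal down-set $\{w : w \le z\}$, where everything is governed by the involution $\Delta(z, \cdot)$ of axiom (c) together with axiom (d). Since the cubic axioms are universal, they transfer automatically from $\mathcal L$ to $\mathcal L_a$.

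The substantive step is the MR-axiom: for $b, c < x$ in $\mathcal L_a$ one must show $\Delta(x, b) \join c < x$ iff $b \meet c$ does not exist in $\mathcal L_a$. I plan to use the compound operation $bc = \Delta(\one, \Delta(b \join c, c)) \join c$ and axiom (a) (which characterizes $\Delta(x, b)$ as the relative complement of $b$ below $x$) to construct an explicit meet in the case $\Delta(x, b) \join c = x$, and to argue in the converse direction that the existence of a meet forces this join equation. For atomicity, I expect the atoms of $\mathcal L_a$ to be precisely the elements $\Delta(y, a)$ with $a \le y$: each such element is minimal in $\mathcal L_a$ (any strict reduction would force the inner $\Delta$-parameter below $a$, violating membership), and any $\Delta(y, x) \in \mathcal L_a$ dominates $\Delta(y, a)$ by axiom (d); recovering $\Delta(y, x)$ as the join of the atoms $\Delta(y', a)$ below it using axiom (a) then gives atomicity. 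The main obstacle is the MR-axiom, since the ``meet does not exist'' direction is a non-existence claim that must be established from the cubic axioms alone, without invoking the interval-algebra structure we are ultimately trying to prove.
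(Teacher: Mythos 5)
The paper offers no proof of this lemma to compare against: it is imported verbatim from \cite{BO:eq} as part of the representation theory (``As part of the representation theory in \cite{BO:eq} we had the following \dots lemma''), so the only honest in-paper ``proof'' is a citation. Judged on its own terms, your sketch has the right skeleton (subalgebra, MR-axiom, atomicity, then the representation theorem for atomic MR-algebras), but each substantive step is left at the level of intention, and at least one proposed mechanism does not work as stated. For closure, rewriting with a common top $z\ge y_1\join y_2$ via axioms (b) and (c) gives $\Delta(y,x)=\Delta\bigl(z,\Delta(\Delta(z,y),\Delta(z,x))\bigr)$, but the new inner witness $\Delta(\Delta(z,y),\Delta(z,x))$ need not lie above $a$, so this does not show that a join or $\Delta$ of elements of $\mathcal L_a$ again has a representation $\Delta(v,u)$ with $a\le u\le v$. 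The workable route is through the equivalent characterization $\mathcal L_a=\{b : a\preccurlyeq b\}$ and the compatibility of $\preccurlyeq$ with $\join$ and $\Delta$; establishing that equivalence and compatibility is precisely the nontrivial content of the cited representation theory, and it is absent from your sketch.

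The two remaining steps are also gaps rather than proofs. You acknowledge that the ``meet does not exist'' direction of the MR-axiom is unresolved; since that is the heart of the lemma, the proof is incomplete there. For atomicity, the candidate atoms $\Delta(y,a)$ are correct, and axiom (d) does put one below every element of $\mathcal L_a$, but your justification of their minimality (``any strict reduction would force the inner $\Delta$-parameter below $a$'') is not an argument: an element of $\mathcal L_a$ below $\Delta(y,a)$ is an arbitrary $\Delta(z,w)$ with $a\le w\le z$, not a perturbation of the chosen representation of $\Delta(y,a)$, so you must show directly that $\Delta(z,w)\le\Delta(y,a)$ forces equality. Finally, the concluding ``hence isomorphic to an interval algebra'' is itself a representation theorem for atomic MR-algebras; deferring it to \cite{BO:eq} is legitimate (the paper does exactly that), but then consistency suggests deferring the whole lemma, since the parts you attempt to reprove are established in the same source.
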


\section{Free Algebras}
\begin{defn} Let $X$ be a set,  the set of generators. 
\begin{enumerate}[(i)]
	\item  Let $\Cal Fr(X)$ denote the free cubic implication algebra with generators $X$. 
	
	\item  Let $X'=\Set{s_x|x\in X}\cup\Set{t_x|x\in X}$ and let
	$B$ be the Boolean algebra generated by $X'$ with the relations 
	$s_x\le t_x$ for all $x\in X$. Let $x'=[s_x, t_x]\in\rsf I(B)=\Cal L$,  and let
	$\Cal L_{1}(X)=\bigcup_{x\in X}\Cal L_{x'}$. 
	
	\item Let $\mathcal F_{k}$ be the free Boolean algebra with the $2k+2$ 
	generators $\Set{s_{0}, \dots, s_{k}}\cup\Set{t_{0}, \dots, t_{k}}$.
\end{enumerate}	
\end{defn}

It is not the case that $\Cal L_{1}(X)$ is the free cubic implication algebra,  it is too 
large. But it serves as a prototype for discussing the
construction of a free cubic implication algebra. It also allows us to compute an 
upper bound to the size of the free algebra.

\begin{lem}\label{lem:finiteFA}
	$\Cal Fr(\Set{x_{0}, \dots, x_{k}})$ is finite with size at most
	$3^{{2^{2k+2}}}$. 
\end{lem}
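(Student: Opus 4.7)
The bound $3^{2^{2k+2}}$ is precisely $|\rsf I(\Cal F_k)|$: the free Boolean algebra $\Cal F_k$ has $2^{2k+2}$ atoms, and the interval algebra of any finite Boolean algebra with $n$ atoms is isomorphic (via signed sets) to an algebra of cardinality $3^n$. My plan is therefore to embed $\Cal Fr(\{x_0,\dots,x_k\})$ into $\rsf I(B)$ and then bound $|\rsf I(B)|$ by $3^{2^{2k+2}}$. The first step is routine: by the universal property of $\Cal Fr$, I take the assignment $x_i \mapsto x_i' = [s_{x_i}, t_{x_i}] \in \rsf I(B)$ and extend it uniquely to a cubic implication algebra homomorphism $\phi \colon \Cal Fr(\{x_0,\dots,x_k\}) \to \rsf I(B)$.

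The substantive step is to show $\phi$ is injective. Given $p(\vec x), q(\vec x)$ distinct in $\Cal Fr$, the equation $p = q$ is not a theorem of the variety, so I would produce a cubic implication algebra $\Cal M$ and $\vec m \in \Cal M$ witnessing $p(\vec m) \ne q(\vec m)$. Using the result of \cite{BO:eq} that face lattices of $n$-cubes generate the variety, I may arrange $\Cal M = \rsf I(C)$ for some Boolean algebra $C$ with each $m_i = [a_i, b_i]$, $a_i \le b_i$. Then I invoke the universal property of $B$ (as the Boolean algebra freely generated by pairs $(s_i, t_i)$ with $s_i \le t_i$) to extend the assignment $s_i \mapsto a_i$, $t_i \mapsto b_i$ to a Boolean homomorphism $\psi \colon B \to C$; this in turn induces a cubic homomorphism $\widehat\psi \colon \rsf I(B) \to \rsf I(C)$, $[u,v] \mapsto [\psi(u), \psi(v)]$, sending $x_i'$ to $m_i$. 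Applying $\widehat\psi$ to $\phi(p)$ and $\phi(q)$ returns $p(\vec m)$ and $q(\vec m)$ respectively, which differ, forcing $\phi(p) \ne \phi(q)$.

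For the size bound, I would observe that $B$ is a quotient of $\Cal F_k$ by the relations $s_i \le t_i$, hence has at most $2^{2k+2}$ atoms; therefore $|\rsf I(B)| \le 3^{2^{2k+2}}$, and combining with injectivity of $\phi$ yields the claimed bound on $|\Cal Fr|$. The main obstacle is the injectivity step: a priori the specific elements $x_i'$ could satisfy extra cubic identities in $\rsf I(B)$ beyond those forced by the axioms, and ruling this out is exactly where the universal property of $B$ combines with the generation result of \cite{BO:eq} to do the work.
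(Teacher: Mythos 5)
Your proof is correct and follows essentially the same route as the paper, whose entire argument is the one-line assertion that $\Cal Fr(\Set{x_{0}, \dots, x_{k}})$ embeds into $\rsf I(\mathcal F_{k})$ via $x_{i}\mapsto[s_{i}, t_{i}]$, from which the bound $3^{2^{2k+2}}$ is read off. The only difference is that you supply the justification for injectivity of the embedding (separating distinct free-algebra elements in a generating interval algebra from \cite{BO:eq} and pulling back through the universal property of $B$), a step the paper leaves implicit.
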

\begin{proof}
	This is because $\Cal Fr(\Set{x_{0}, \dots, x_{k}})$ embeds into
	$\rsf I(\mathcal F_{k})$ -- letting $x_{i}=[s_{i}, t_{i}]$. 
\end{proof}

The idea of this proof is crucial -- we embed the free algebra into 
an interval algebra and determine properties of the free algebra from 
the embedding. 

Suppose that $X=\{a_0, a_1, \dots, a_k\}$ is finite. Let $\Cal Fr(X)$ 
embed into $\rsf I(C)=\Cal L$ as an upper segment for some Boolean algebra $C$. 
Let $e(a_i)=[s_i, t_i]=a_i'$ for $0\le i\le k$. Define inductively 
\begin{align*}
	\delta_0&=a_0'\\
	\delta_{i+1}&=\delta_{i}\caret a'_{i+1}\\
	&=\delta_i\meet\Delta(\delta_i\join a'_{i+1}, a'_{i+1}). 	
\end{align*}

% We know that the meet exists, by the MR axiom, as 
% $\delta_i\join a_{i+1}=\delta_i\join a_{i+1}$. 

Then it must be the case that $\Cal Fr(X)$ embeds into $\Cal 
L_{\delta_k}$, since we have that $\delta_k\preccurlyeq a_i'$ for all $i$ and 
the image of
$\Cal Fr(X)$ in $\rsf I(C)$ is the set $\bigcup_{i=0}^k\Cal L_{a_i'}$. 
Thus we may as well assume that $\delta_k$ is a vertex, and furthermore 
that it is $[0, 0]$ as all vertices are interchangeable by a cubic 
isomorphism. Furthermore, we see that if $B^*$ is the subalgebra 
generated by the $s_i$'s and the $t_i$'s, then in fact $\Cal Fr(X)$ 
embeds into $\rsf I(B^*)_{\delta_k}$. 

So now we construct a new candidate for $\Cal Fr(X)$, where $X$ is the 
finite set
$\{a_0, a_1, \dots, a_k\}$. Let $\Cal B_X$ be the Boolean algebra 
generated by
$\{s_0, \dots, s_k\}\cup\{t_0, \dots, t_k\}$ with the relations
\begin{align*}
	s_i&\le t_i\qquad\text{ for all }0\le i\le k\\
	\delta_k&=[0, 0]. 	
\end{align*}

By the above argument, we see that 
$$\card{\Cal Fr(X)}\le\card{\bigcup_{i=0}^k\Cal L_{[s_i, t_i]}}. $$
We will compute the cardinality of the right-hand-side 
and show that the intervals $[s_{i}, t_{i}]$ cubically generate 
${\bigcup_{i=0}^k\Cal L_{[s_i, t_i]}}$ and so 
$\Cal Fr(X)\simeq\bigcup_{i=0}^k\Cal L_{[s_i, t_i]}$.

\section{Getting Better Relations}
The relation $\delta_k=[0, 0]$ is not easy to use,  so we will recast it 
as a series of statements about the $s_i$'s and the $t_i$'s. 

First a fact about interval algebras that we will often make use of 
in the following argument. It is easily verified from the definitions 
above.
\begin{quote}
	If $a=[a, b]$ and $w=[u, v]$ are any two intervals then\\
$$
	a\meet \Delta(a\join w, w)=
	[a\join(b\meet\comp v), b\meet(a\join\comp u)].
$$
\end{quote}
% \begin{proof}
% 	\begin{align*}
% 		w&=[u, v]\\
% 		a\joinw&=[a\meet u, b\join v]\\
% 		\Delta(a\joinw, w)&=
% 		   [(a\meet u)\join ((b\join v)\meet \comp v), (b\join v)\meet ((a\meet 
% 		   u)\join\comp u)]\\
% 		   &=[(a\meet u)\join(b\meet\comp v), (b\join v)\meet(a\join\comp 
% 		   u)]\\
% 		a\meet \Delta(a\joinw, w)&=
% 		   [a\join(a\meet u)\join(b\meet\comp v), b\meet(b\join 
% 		   v)\meet(a\join\comp u)]\\
% 		   &=[a\join(b\meet\comp v), b\meet(a\join\comp u)]. 
% 	\end{align*}
% 	
% 	We recall, here for convenience,  that $a\le b$ implies 
% 	\begin{align*}
% 		b\meet(a\join\comp u)&=(b\meet a)\join(b\meet\comp u)\\
% 		&=a\join(b\meet\comp u). 
% 	\end{align*}
% \end{proof}

Now define inductively a sequence from $\Cal B_X$ as follows:\\
\begin{center}
\begin{tabular}{>{$}r<{$}!{ $=$ }>{$}l<{$}!{\hspace{1cm}}>{$}r<{$}!{ $=$ }>{$}l<{$}}
	\sigma_0 & s_0;&\tau_0 & t_0;\\
	\sigma_{i+1} & \sigma_i\join(\tau_i\meet\comp t_{i+1}); & %
	\tau_{i+1} & \sigma_i\join(\tau_i\meet\comp s_{i+1}). 
\end{tabular}
\end{center}
It is not hard to see that $\sigma_i\le \tau_i$ for all $i$ and that 
$\delta_i=[\sigma_i, \tau_i]$. So our extra condition can now be 
rewritten as $\sigma_k=\tau_k=0$. This is still rather unsatisfactory. 
Instead of using these relations we will produce another set that 
give useful information more directly. We do this by
defining a larger class of relations that are used to show that the 
desired relations capture the ones above and no more.

\begin{defn}
	Let $0\le l\le k$ in $\N$, and $t, \alpha$ in $B_{X}$. Then 
	\begin{align*}
		R_{l, k, i}(t, \alpha):&& t&\le\bigvee_{j=l}^{i}s_{j}\join 
		t_{i+1}\join\alpha\\
		Q_{l, k}(t, \alpha):&& t &\le \bigvee_{j=l}^{k}s_{j}\join\alpha.
	\end{align*}
\end{defn}

We now demonstrate that these are the desired relations.

\begin{lem}
	$$
	\text{If }\tau_{k}\le\alpha \text{ then }t_{0}\le\bigvee_{j=1}^{k}s_{j}\join\alpha.
	$$
\end{lem}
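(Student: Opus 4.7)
The plan is to prove the lemma by induction on $k$, where the strength of the statement comes precisely from letting $\alpha$ be an arbitrary element of $\mathcal B_X$. This parametrization is what makes the induction work: in the step from $k$ to $k+1$, the Boolean inequality coming from $\tau_{k+1} \le \alpha$ can be absorbed by enlarging $\alpha$.

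For the base case $k=0$, the hypothesis reads $\tau_0 = t_0 \le \alpha$ and the empty join on the right is $0$, so the conclusion $t_0 \le \alpha$ is immediate.

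For the inductive step, suppose the result holds for $k$ and assume $\tau_{k+1} \le \alpha$. From the recursive definition $\tau_{k+1} = \sigma_k \join (\tau_k \meet \comp{s_{k+1}})$, the second summand lies below $\tau_{k+1}$, so $\tau_k \meet \comp{s_{k+1}} \le \alpha$. In the Boolean algebra $\mathcal B_X$ this is equivalent to
\begin{equation*}
\tau_k \le s_{k+1} \join \alpha.
\end{equation*}
Applying the inductive hypothesis with $\alpha$ replaced by the new element $s_{k+1} \join \alpha$ yields
\begin{equation*}
t_0 \le \bigvee_{j=1}^{k} s_j \join s_{k+1} \join \alpha = \bigvee_{j=1}^{k+1} s_j \join \alpha,
\end{equation*}
which is exactly the desired conclusion at stage $k+1$.

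There is no real obstacle here; the only subtlety is recognizing that the lemma must be set up with a free parameter $\alpha$ so that the Boolean rewriting $a \meet \comp{b} \le c \iff a \le b \join c$ can be used to shift the literal $s_{k+1}$ across the inequality and feed the result back into the induction hypothesis. I expect the same schema (induction on $k$, with $\alpha$ absorbing the newly produced $s_{i+1}$ or $t_{i+1}$) to drive the proofs of the related $R_{l,k,i}$ and $Q_{l,k}$ relations that follow in the paper.
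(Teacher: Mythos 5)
Your proof is correct and follows essentially the same route as the paper's: induction on $k$, extracting $\tau_k \meet \comp{s_{k+1}} \le \alpha$ from the recursion, rewriting it as $\tau_k \le s_{k+1}\join\alpha$, and invoking the inductive hypothesis with the enlarged parameter $s_{k+1}\join\alpha$. The base case and the absorption of the new literal into $\alpha$ match the paper's argument exactly.
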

\begin{proof}
	For $k=0$ this just says that $t_{0}\le\alpha$.
	
	If general we have 
	\begin{align*}
		\tau_{k+1}=\sigma_{k}\join(\tau_{k}\meet\comp 
		s_{k+1})\le\alpha&\iff 
		\sigma_{k}\le\alpha\text{ and }\tau_{k}\meet\comp s_{k+1}\le\alpha\\
		&\hphantom{\Leftarrow}\Rightarrow
		\tau_{k}\le s_{k+1}\join \alpha\\
		&\hphantom{\Leftarrow}\Rightarrow t_{0}\le\bigvee_{j=1}^{k}s_{j}\join s_{k+1}\join 
		\alpha\\
		&\hphantom{\Leftarrow}\Rightarrow t_{0}\le\bigvee_{j=1}^{k+1}s_{j}\join\alpha.
	\end{align*}
\end{proof}

\begin{lem}
	\begin{gather*}
		\text{If }\sigma_{k}\le\alpha\text{ then }\quad
			t_{0}  \le \bigvee_{j=1}^{i}s_{j}\join t_{i+1}\join\alpha  \text{ 
			for all }i<k\text{ and }  
			s_{0} \le \alpha. 
	\end{gather*}
\end{lem}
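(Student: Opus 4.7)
The plan is to proceed by induction on $k$, using the previous lemma to produce the ``new'' relation at each step and the inductive hypothesis to carry over the old ones.

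The base case $k=0$ is immediate: $\sigma_0 = s_0 \le \alpha$ gives the $s_0 \le \alpha$ conclusion, and the family of inequalities indexed by $i < 0$ is vacuous.

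For the inductive step, assume the result for $k$ and suppose $\sigma_{k+1} \le \alpha$. Unwinding the definition,
\begin{equation*}
\sigma_{k+1} = \sigma_k \join (\tau_k \meet \comp t_{k+1}) \le \alpha
\end{equation*}
is equivalent to the conjunction of $\sigma_k \le \alpha$ and $\tau_k \meet \comp t_{k+1} \le \alpha$. The first piece feeds directly into the inductive hypothesis, giving $s_0 \le \alpha$ together with $t_0 \le \bigvee_{j=1}^{i} s_j \join t_{i+1} \join \alpha$ for every $i < k$. This covers all required conclusions except the one for $i = k$.

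For that last case, rearrange the second piece as $\tau_k \le t_{k+1} \join \alpha$ and apply the preceding lemma (the one about $\tau_k$) with $\alpha$ replaced by $t_{k+1} \join \alpha$. This yields
\begin{equation*}
t_0 \le \bigvee_{j=1}^{k} s_j \join t_{k+1} \join \alpha,
\end{equation*}
which is exactly the $i = k$ instance needed to complete the induction. I do not expect a real obstacle here; the content of the argument is simply to notice that $\sigma_{k+1} \le \alpha$ decomposes into one inequality feeding the induction and another feeding the previous lemma, so the two lemmas are essentially dual and their proofs mirror each other.
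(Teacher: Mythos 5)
Your proof is correct and follows essentially the same route as the paper's: both decompose $\sigma_{k+1}\le\alpha$ into $\sigma_{k}\le\alpha$ (feeding the inductive hypothesis) and $\tau_{k}\meet\comp t_{k+1}\le\alpha$, i.e. $\tau_{k}\le t_{k+1}\join\alpha$ (feeding the preceding lemma with $\alpha$ replaced by $t_{k+1}\join\alpha$ to obtain the $i=k$ case). No substantive difference.
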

\begin{proof}
	For $k=0$ this just says that $s_{0}\le\alpha$.
	
	In general we have 
	\begin{align*}
		\sigma_{k+1}=\sigma_{k}\join(\tau_{k}\meet\comp t_{k+1})\le\alpha
		&\iff \sigma_{k}\le\alpha\text{ and }\tau_{k}\le 
		t_{k+1}\join\alpha\\
		&\hphantom{\Leftarrow}\Rightarrow
		\begin{array}[t]{r@{\le}ll}
			t_{0} & \displaystyle\bigvee_{j=1}^{i}s_{j}\join t_{i+1}\join\alpha & \text{ 
			for all }i<k\text{; }  \\
			s_{0} & \alpha\text{; and } &\\
			t_{0} & \displaystyle\bigvee_{j=1}^{k}s_{j}\join 
		t_{k+1}\join\alpha
		\end{array}\\
		&\hphantom{\Leftarrow}\Rightarrow
		\begin{array}[t]{r@{\le}ll}
			t_{0} & \displaystyle\bigvee_{j=1}^{i}s_{j}\join t_{i+1}\join\alpha & \text{ 
			for all }i\le k\text{; and}\hbox{\vrule depth 3mm width0pt 
			height0pt} \\
			s_{0} & \alpha. & 
		\end{array}
	\end{align*}
\end{proof}

This shows the necessity of these relations,  now we show they are 
also sufficient.
\begin{lem}
	Suppose that $i<k$ and
	\begin{align*}
		\sigma_{i}&\le\alpha;\\
		R_{i+1, k, j}(\tau_{i}, \alpha)&\text{ for all }i\le j<k;\text{ 
		and }\\
		Q_{i+1, k}(\tau_{i}, \alpha).& \\
		\intertext{ Then }
		\sigma_{i+1}&\le\alpha;\\
		R_{i+2, k, j}(\tau_{i+1}, \alpha)&\text{ for all }i+1\le j<k;\text{ 
		and }\\
		Q_{i+2, k}(\tau_{i+1}, \alpha).&
	\end{align*}
\end{lem}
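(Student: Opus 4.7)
The plan is to verify each of the three conclusions separately by unpacking the definitions of $R$ and $Q$ and using the recursive formulas $\sigma_{i+1}=\sigma_{i}\join(\tau_{i}\meet\comp{t_{i+1}})$ and $\tau_{i+1}=\sigma_{i}\join(\tau_{i}\meet\comp{s_{i+1}})$. In every case the hypothesis $\sigma_{i}\le\alpha$ takes care of the $\sigma_{i}$ summand, so the real work is to show that the second summand lies below the appropriate join; after passing $\comp{t_{i+1}}$ or $\comp{s_{i+1}}$ to the other side of the inequality, this reduces to one of the assumed relations on $\tau_{i}$.

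First I would dispatch $\sigma_{i+1}\le\alpha$. Since $\sigma_{i}\le\alpha$, it suffices to show $\tau_{i}\meet\comp{t_{i+1}}\le\alpha$, equivalently $\tau_{i}\le t_{i+1}\join\alpha$. This is exactly $R_{i+1,k,i}(\tau_{i},\alpha)$ (the $j=i$ case, where the Boolean join $\bigvee_{l=i+1}^{i}s_{l}$ is empty), which is one of the hypotheses.

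Next I would handle the family $R_{i+2,k,j}(\tau_{i+1},\alpha)$ for $i+1\le j<k$. Again $\sigma_{i}\le\alpha$ absorbs that summand, so the claim reduces to
$$
\tau_{i}\meet\comp{s_{i+1}}\le \bigvee_{l=i+2}^{j}s_{l}\join t_{j+1}\join\alpha,
$$
which by absorbing $\comp{s_{i+1}}$ is equivalent to
$$
\tau_{i}\le s_{i+1}\join\bigvee_{l=i+2}^{j}s_{l}\join t_{j+1}\join\alpha=\bigvee_{l=i+1}^{j}s_{l}\join t_{j+1}\join\alpha,
$$
and this is precisely $R_{i+1,k,j}(\tau_{i},\alpha)$, which is in the hypothesis since $i+1\le j<k$ implies $i\le j<k$.

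Finally, $Q_{i+2,k}(\tau_{i+1},\alpha)$ is handled identically: modulo $\sigma_{i}\le\alpha$, one needs $\tau_{i}\meet\comp{s_{i+1}}\le\bigvee_{l=i+2}^{k}s_{l}\join\alpha$, which after absorbing $s_{i+1}$ into the join becomes $Q_{i+1,k}(\tau_{i},\alpha)$. There is no real obstacle here; the only thing that might trip one up is keeping the index bookkeeping straight, in particular noticing that the $j=i$ instance of $R$ is what produces the $t_{i+1}\join\alpha$ bound needed for $\sigma_{i+1}$, so the range $i\le j<k$ in the hypothesis (rather than $i+1\le j<k$) is essential.
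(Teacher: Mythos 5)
Your proof is correct and follows essentially the same route as the paper's: the $j=i$ instance of $R_{i+1,k,j}(\tau_i,\alpha)$ gives $\tau_i\meet\comp{t_{i+1}}\le\alpha$ and hence $\sigma_{i+1}\le\alpha$, while the $j>i$ instances and $Q_{i+1,k}(\tau_i,\alpha)$ give the bounds on $\tau_i\meet\comp{s_{i+1}}$ needed for $\tau_{i+1}$ after absorbing $s_{i+1}$ into the join. The index bookkeeping you flag is exactly the only delicate point, and you handle it as the paper does.
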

\begin{proof}
	We have $\tau_{i}\le\bigvee_{p=i+1}^{j}s_{p}\join t_{j+1}\join\alpha$ for all 
	$i\le j<k$. 
	
	Taking 
	$j=i$ we have that $\tau_{i}\meet\comp t_{i+1}\le\alpha$.
	As $\sigma_{i}\le\alpha$ we therefore get
	$\sigma_{i+1}=\sigma_{i}\join(\tau_{i}\meet\comp t_{i+1})\le\alpha$.
	
	For $j>i$ we get $\tau_{i}\meet\comp 
	s_{i+1}\le\bigvee_{p=i+2}^{j}s_{p}\join t_{j+1}\join\alpha$, and so
	$\tau_{i+1}=\sigma_{i}\join(\tau_{i}\meet\comp s_{i+1})\le
	\alpha\join 
	\bigvee_{p=i+2}^{j}s_{p}\join t_{j+1}\join\alpha= \bigvee_{p=i+2}^{j}s_{p}\join t_{j+1}\join\alpha$.
	
	From $Q_{i+1, k}(\tau_{i}, \alpha)$ we have 
	$\tau_{i}\le\bigvee_{p=i+1}^{k}s_{p}\join\alpha$ and so
	$\tau_{i}\meet\comp s_{i+1}\le 
	\tau_{i}\le\bigvee_{p=i+2}^{k}s_{p}\join\alpha$ which gives
	$\tau_{i+1}\le \tau_{i}\le\bigvee_{p=i+2}^{k}s_{p}\join \alpha$.
\end{proof}

\begin{cor}
	Suppose that 
	\begin{align*}
		s_{0}&\le\alpha;\\
		R_{1,  k, j}(t_{0}, \alpha)&\qquad\text{ for all }1\le j<k;\text{ 
		and }\\
		Q_{1, k}(t_{0}, \alpha).& \\
		\intertext{ Then }
		\sigma_{k}&\le\alpha;\text{ and }\\
		\tau_{k}&\le\alpha.
	\end{align*}
\end{cor}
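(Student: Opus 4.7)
The plan is a direct induction that iterates the preceding lemma $k$ times. The hypotheses of the corollary are precisely the hypotheses of the lemma at $i = 0$ (after identifying $\sigma_0 = s_0$ and $\tau_0 = t_0$, so that $R_{1,k,j}(t_0,\alpha) = R_{i+1,k,j}(\tau_i,\alpha)$ and $Q_{1,k}(t_0,\alpha) = Q_{i+1,k}(\tau_i,\alpha)$ at $i=0$). So the base case of the induction is immediate.

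For the inductive step I simply invoke the preceding lemma: from the package $(\sigma_i\le\alpha$, $R_{i+1,k,j}(\tau_i,\alpha)$ for $i\le j<k$, $Q_{i+1,k}(\tau_i,\alpha))$ it hands us the analogous package at $i+1$. The lemma applies whenever $i<k$, so I can keep going through $i=0,1,\dots,k-1$, ending with the package at level $k$:
\begin{align*}
	\sigma_k &\le \alpha, \\
	R_{k+1,k,j}(\tau_k,\alpha) &\text{ for all } k\le j<k, \\
	Q_{k+1,k}(\tau_k,\alpha). &
\end{align*}
The first of these is already one of the two conclusions sought.

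The second conclusion, $\tau_k\le\alpha$, is then read off from $Q_{k+1,k}(\tau_k,\alpha)$: by definition this asserts $\tau_k\le\bigvee_{j=k+1}^{k}s_j\join\alpha$, and since the join is empty this collapses to $\tau_k\le\alpha$. Similarly the $R$-line at level $k$ quantifies over the empty range $k\le j<k$ and carries no content, which is why the induction terminates cleanly exactly at this stage.

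There is no genuine obstacle here; the whole corollary is really just bookkeeping for the preceding lemma. The only thing worth double-checking is that the index ranges line up between the corollary's hypotheses and the lemma's hypotheses at $i=0$ (in particular the base-case instance of $R$), and that the terminal relations $R_{k+1,k,j}$ and $Q_{k+1,k}$ do degenerate in the way claimed so that $\tau_k\le\alpha$ drops out without a further argument.
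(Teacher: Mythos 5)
Your proof is correct and is exactly the paper's argument: the paper's entire proof is that the corollary follows immediately by iterating the preceding lemma and noting that $Q_{k+1,k}(\tau_{k},\alpha)$ is literally $\tau_{k}\le\alpha$, which is precisely your iteration plus the empty-join observation (and likewise the vacuous $R$-range at the last stage). The index concern you flag is real but is an off-by-one in the paper's statement rather than a gap in your argument: the lemma's step at $i=0$ genuinely uses $R_{1,k,0}(t_{0},\alpha)$, i.e.\ $t_{0}\le t_{1}\join\alpha$, to obtain $\sigma_{1}\le\alpha$, so the corollary's $R$-hypothesis should run over $0\le j<k$, consistent with the ideal generator $r_{0}=t_{0}\meet\comp{t_{1}}$ that the paper uses elsewhere.
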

\begin{proof}
	It follows immediately from the lemma,  and noting that $Q_{k+1, 
	k}(\tau_{k}, \alpha)$ is the same as $\tau_{k}\le \alpha$.
\end{proof}

\begin{prop}
	$\sigma_{k}=\tau_{k}=0$ iff 
	\begin{tabular}[t]{>{$\displaystyle}r<{$}@{}>{$\displaystyle}l<{$}}
		s_{0}&\le0;\\
		R_{1 , k, j}(t_{0}, 0)&\text{ for all }1\le j<k;\text{ 
		and }\\
		Q_{1, k}(t_{0}, 0).
	\end{tabular}
\end{prop}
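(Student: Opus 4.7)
The plan is to observe that the proposition is essentially the $\alpha = 0$ specialization of the three preceding results packaged as a biconditional, so the proof splits cleanly into the two directions, each being a direct appeal to earlier material.

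For the forward direction, I would assume $\sigma_k = \tau_k = 0$. Setting $\alpha = 0$ in the first of the two implications lemmas (the one hypothesizing $\tau_k \le \alpha$) immediately yields $t_0 \le \bigvee_{j=1}^{k} s_j$, which is exactly $Q_{1,k}(t_0, 0)$. Then setting $\alpha = 0$ in the second lemma (the one hypothesizing $\sigma_k \le \alpha$) gives $s_0 \le 0$ together with $t_0 \le \bigvee_{j=1}^{i} s_j \join t_{i+1}$ for all $i < k$; in particular, this includes the relations $R_{1,k,j}(t_0, 0)$ for $1 \le j < k$ demanded by the proposition.

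For the reverse direction, I would assume the three listed relations hold with $\alpha = 0$ and invoke the corollary verbatim with $\alpha = 0$ to conclude $\sigma_k \le 0$ and $\tau_k \le 0$. Since $\sigma_k$ and $\tau_k$ are elements of a Boolean algebra, being $\le 0$ forces equality with $0$.

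There is no genuine obstacle; the only point worth flagging is a bookkeeping check that the index range ``$1 \le j < k$'' in the proposition's list of $R$-relations is exactly what the corollary consumes and what the forward-direction lemma delivers. Once this indexing is verified, the proof is a two-line argument citing the preceding lemma and corollary.
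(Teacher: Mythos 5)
Your proposal is correct and follows the paper's own route exactly: the reverse direction is the corollary specialized to $\alpha=0$, and the forward direction is the two necessity lemmas (the ones hypothesizing $\tau_k\le\alpha$ and $\sigma_k\le\alpha$) specialized to $\alpha=0$. The paper's proof is simply the terser ``immediate from the last corollary,'' relying on its earlier remark that the two lemmas already established necessity, so you have just made the same argument explicit.
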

\begin{proof}
	Immediate from the last corollary.
\end{proof}

\begin{defn}
	Let $\mathcal B_{k}$ be the Boolean algebra generated by
	$\Set{s_{0}, \dots, s_{k}}\cup\Set{t_{0}, \dots, t_{k}}$ with the 
	relations\\
	\begin{tabular}[t]{r>{$\displaystyle}r<{$}@{}>{$\displaystyle}l<{$}}
		$Z$: & s_{0}&\le0;\\
		$S_{i}$: & s_{i}&\le t_{i} \text{ for all }i\le k;\\
		$R_{j}$: & R_{1 , k, j}(t_{0}, 0)&\text{ for all }1\le j<k;\text{ 
		and }\\
		$Q_{k}$: & Q_{1, k}(t_{0}, 0).
	\end{tabular}
	
	Let $\mathcal L^{(k)}=\rsf I(\mathcal B_{k})$ and
	$\mathcal L(X)=\bigcup_{i=0}^{k}\mathcal L^{(k)}_{[s_{i}, t_{i}]}$.
\end{defn}

We aim to compute the size of $\mathcal L(X)$ as this provides an 
upper bound to the size of $\Cal Fr(X)$. This starts with a lot 
of atom counting in $\mathcal B_{k}$.

\section{Looking at Atoms}
In this section we aim to see how atoms are produced in $\mathcal 
B_{k}$ as a preliminary to counting them. This is based upon our 
knowledge of atoms in $\mathcal F_{k}$ and the ideal we quotient out 
by to get $\mathcal B_{k}$. 
This ideal is generated by the set
$$
S_{k}=\Set{s_{0}}\cup\Set{s_{i}\meet\comp t_{i} | 0\le i\le k}\cup
\Set{t_{0}\meet\vphantom{\bigwedge}\smash{\bigwedge_{j=1}^{i}}\comp s_{j}\meet\comp t_{i+1} | 0\le 
	i<k}\cup\Set{t_{0}\meet\vphantom{\bigwedge}\smash{\bigwedge_{j=0}^{k}}\comp s_{j}}. 
	\vphantom{\bigwedge_{j=1}^{i}}
$$
The elements of $S_{k}$ come in four different kinds. For ease of 
reference we name them as
\begin{align*}
	s_{0}& \\
	u_{i}&=s_{i}\meet\comp t_{i}&&\text{ for }0\le i\le k\\
	r_{i}&=t_{0}\meet\bigwedge_{j=1}^{i}\comp s_{j}\meet\comp t_{i+1}&&\text{ 
	for }0\le i\le k-1\\
	q_{k}&=t_{0}\meet\bigwedge_{j=0}^{k}\comp s_{j}. 
\end{align*}

There is one new atom that comes from the failure of $Q_{k-1}$. 
\begin{lem}\label{lem:newAtom}
	Let $a_{k}=t_{0}\meet\bigwedge_{i=1}^{k-1}\comp s_{i}$.
	Then $a_{k}$ is an atom in $\mathcal B_{k}$.
\end{lem}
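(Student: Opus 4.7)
The plan is to work inside $\mathcal{F}_k$ and exploit that $\mathcal{B}_k = \mathcal{F}_k / I$, where $I$ is the ideal generated by $S_k$. Since $\mathcal{F}_k$ is a finite Boolean algebra, $I$ is principal, generated by $i_0 = \bigvee S_k$, so $\mathcal{B}_k$ is isomorphic to the relative Boolean algebra $[0, \comp{i_0}]$ inside $\mathcal{F}_k$. In particular, the atoms of $\mathcal{B}_k$ are in bijection with the atoms (minterms) of $\mathcal{F}_k$ that do not lie in $I$.

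The element $a_k$ is, in $\mathcal{F}_k$, the join of those minterms in which $t_0 = 1$ and $s_1 = \cdots = s_{k-1} = 0$, with the remaining $k+2$ generators $s_0, s_k, t_1, \ldots, t_k$ assigned arbitrarily. In $\mathcal{B}_k$ the image of $a_k$ is the join of the images of these $2^{k+2}$ minterms, and any minterm that lies in $I$ maps to $0$. So the goal reduces to showing that exactly one such minterm survives.

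Next I would work through the four kinds of generators listed in $S_k$, recording the constraint each imposes on a surviving minterm below $a_k$. The generator $s_0$ forces $s_0 = 0$ in the minterm. The clauses $u_i$ for $1 \le i \le k-1$ are automatic since $s_i = 0$ there, and $u_0$ is automatic once $s_0 = 0$; only $u_k$ leaves a condition, namely $s_k = 0$ or $t_k = 1$. Each $r_i$ with $0 \le i \le k-1$, after substituting the already-forced values ($t_0 = 1$ and $s_1 = \cdots = s_{k-1} = 0$), collapses to $t_0 \meet \comp{t_{i+1}}$, so avoiding $r_i$ forces $t_{i+1} = 1$; taken over all $i$ this gives $t_1 = \cdots = t_k = 1$. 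Finally $q_k$ collapses to $t_0 \meet \comp{s_k}$, forcing $s_k = 1$. These constraints jointly pin down a single minterm, and once $t_k = 1$ holds the remaining $u_k$ clause is satisfied automatically.

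The main piece of bookkeeping is the collapse of the $r_i$ and $q_k$ clauses under the forced coordinates -- one has to verify that inside each candidate minterm below $a_k$, the indicated simplifications really do occur. With that in hand the conclusion is immediate: exactly one minterm of $\mathcal{F}_k$ below $a_k$ escapes $I$, so the image of $a_k$ in $\mathcal{B}_k$ is a single atom, proving the lemma.
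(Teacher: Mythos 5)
Your proof is correct and takes essentially the same approach as the paper's: both work in $\mathcal F_{k}$ and decide membership in the ideal by testing minterms against each of the four kinds of generators in $S_{k}$. The only difference is organizational — the paper first argues $a_{k}$ is ``an atom or zero'' using $Q_{k}$ and the $R_{i}$ and then checks the one candidate minterm escapes the ideal, while you run both steps as a single constraint analysis over the $2^{k+2}$ minterms below $a_{k}$.
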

\begin{proof}
	We recall from the usual construction of the free Boolean algebra
	on $\Set{s_{0}, \dots, s_{k}}\cup\Set{t_{0}, \dots, t_{k}}$ that
	every atom has the form
	$\bigwedge_{j=0}^{k}\varepsilon_{j}s_{j}\meet\bigwedge_{j=0}^{k}\delta_{j}t_{j}$
	where  $\varepsilon_{j}$   and $\delta_{j}$ are $\pm1$ and $1a=a$, 
	$-1a=\comp a$.
	
	In $\mathcal B_{k}$ we have $s_{0}=0$ so that $\varepsilon_{0}=-1$. For 
	atoms below $a_{k}$ we have $\delta_{0}=1$ and $\varepsilon_{j}=-1$ for 
	all $j<k$. 
	
	By $Q_{k}$ we have $a_{k}\meet\comp s_{k}=0$ so that $a_{k}\le s_{k}$.
	Also for $0\le i<k$ we have 
	$a_{k}\meet\comp t_{i+1}\le t_{0}\meet\bigwedge_{j=1}^{i}\comp 
	s_{j}\meet\comp t_{i+1}=0$ by $R_{k, i}$ so that $a_{k}\le t_{i}$ for 
	all $i$. This implies 
	$a_{k}=a_{k}\meet s_{k}\meet\bigwedge_{i=1}^{k} t_{i}$ is 
	an atom or zero.
	
	If $a_{k}=0$ then so does 
	$a'_{k}=a_{k}\meet s_{k}\meet\bigwedge_{i=1}^{k} t_{i}$. 
	Therefore (in $\mathcal F_{k}$) we have 
	$a_{k}'=\bigwedge_{i=0}^{k-1}\comp s_{i}\meet
	s_{k}\meet\bigwedge_{i=0}^{k} t_{i}$ is in the ideal generated by 
	$S_{k}$. As $a_{k}'$ is an atom in $\mathcal F_{k}$ this means that
	$a_{k}'$ must be below one of the elements of $S_{k}$. 
	\begin{enumerate}[-]
		\item $a_{k}'\le \comp s_{0}$ \quad so it is not below $s_{0}$. 
	
		\item $a_{k}'\le\comp s_{i}$ for $i=0, \dots, k-1$ \quad so that
		$a_{k}'$ is not below any $u_{i}$. 
	
		\item $a_{k}'\le s_{k}$ \quad so that $a_{k}'$ is not below $q_{k}$. 
	
		\item $a_{k}'\le t_{i}$ for $i=0, \dots, k$ \quad so that $a_{k}'$ is 
		not below $r_{i}$. 
	\end{enumerate}
	Hence $a_{k}'$ cannot be in this ideal. 
\end{proof}

The remainder of the analysis is an investigation of the change from 
$\mathcal B_{k-1}$ to $\mathcal B_{k}$. The last lemma is the most 
important change as in $\mathcal B_{k-1}$ we have $a_{k}=0$ by $Q_{k-1}$.

We need also note that $R_{i}$ is independent of $k$ so another
change is that $R_{k-1}$ comes into effect. As $Q_{k}$ implies $R_{k-1}$ 
since $s_{k}\le t_{k}$ this is not noticeable until $\mathcal 
B_{k+1}$,  so we really only have $R_{k-2}$ and $Q_{k}$ to worry 
about.

Further we need to note that the relations $R_{i}$ and $Q_{k}$ 
only affect the interval $[0, t_{0}]$ and do nothing in $[0, \comp 
t_{0}]$ --
this is because they are all of the form $t_{0}\meet\text{something}=0$.

%\newpage
\begin{lem}\label{lem:atomSplit}
	Let $a\in\mathcal B_{k-1}$ be an atom. Then $a$ is split into three 
	atoms in $\mathcal B_{k}$.
\end{lem}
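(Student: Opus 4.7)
The plan is to realise both algebras as quotients $\mathcal B_{k-1}=\mathcal F_{k-1}/I_{k-1}$ and $\mathcal B_k=\mathcal F_k/I_k$ of the free Boolean algebras on $2k$ and $2k+2$ generators by the ideals generated by $S_{k-1}$ and $S_k$. Since each $I$ is generated by join-irreducible elements, an atom of $\mathcal F$ lies in $I$ iff it lies below a single generator, so the atoms of $\mathcal B_k$ correspond bijectively to atoms of $\mathcal F_k$ not covered by $S_k$. I identify $a$ with the unique atom $\bigwedge_{j=0}^{k-1}\varepsilon_j s_j\meet\bigwedge_{j=0}^{k-1}\delta_j t_j\in\mathcal F_{k-1}$ representing it; the hypothesis is that this atom sits below no element of $S_{k-1}$. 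Under the inclusion $\mathcal F_{k-1}\hookrightarrow\mathcal F_k$ the element $a$ becomes the disjoint join
\[
a=\bigvee_{\varepsilon,\delta\in\{+,-\}}\bigl(a\meet\varepsilon s_k\meet\delta t_k\bigr)
\]
of four atoms of $\mathcal F_k$, and my task reduces to counting how many of these survive in the quotient.

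The candidate $a\meet s_k\meet\comp t_k$ is killed immediately, since it lies below $u_k\in S_k$. For the other three I would walk through each generator of $I_k$ in turn. The generators shared with $S_{k-1}$, namely $s_0$, $u_0,\dots,u_{k-1}$ and $r_0,\dots,r_{k-2}$, involve only the variables indexing $\mathcal F_{k-1}$, so any candidate below such a generator would force $a$ itself below that generator, contradicting $a\notin I_{k-1}$. The generator $u_k$ covers none of the remaining three candidates. The two genuinely new generators are $r_{k-1}=t_0\meet\bigwedge_{j=1}^{k-1}\comp s_j\meet\comp t_k$ and $q_k=t_0\meet\bigwedge_{j=0}^{k}\comp s_j$. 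A candidate below $r_{k-1}$ forces $a\le t_0\meet\bigwedge_{j=1}^{k-1}\comp s_j$ in $\mathcal F_{k-1}$, and combining this with $\varepsilon_0=-1$ (already forced on $a$ by avoidance of $s_0\in S_{k-1}$) yields $a\le q_{k-1}$, contradiction; a candidate below $q_k$ gives $a\le q_{k-1}$ directly.

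The main obstacle I anticipate is precisely this last piece of bookkeeping: the single generator $q_{k-1}\in S_{k-1}\setminus S_k$ that disappears in passing from $\mathcal B_{k-1}$ to $\mathcal B_k$ must be shown to reappear as a forced conclusion exactly when the two new generators $r_{k-1}$ and $q_k$ are tested against the $\mathcal F_{k-1}$-part of a candidate coming from an atom of $\mathcal B_{k-1}$. Once this verification is in hand, the three remaining candidates $a\meet s_k\meet t_k$, $a\meet\comp s_k\meet t_k$ and $a\meet\comp s_k\meet\comp t_k$ are atoms of $\mathcal B_k$, they are pairwise disjoint, and their join in $\mathcal B_k$ is the image of $a$, so $a$ splits into exactly three atoms as claimed.
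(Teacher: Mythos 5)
Your proposal is correct and follows essentially the same route as the paper's own proof: decompose $a$ in $\mathcal F_k$ into the four pieces $a\meet\varepsilon s_k\meet\delta t_k$, kill $a\meet s_k\meet\comp t_k$ via $u_k$, and check the remaining three against each generator of the ideal, with the key step being that $r_{k-1}$ or $q_k$ would force $a\le q_{k-1}\in S_{k-1}$. The only cosmetic point is that the reduction ``an atom lies in the ideal iff it lies below a single generator'' holds because atoms are join-prime, not because the generators are join-irreducible.
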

\begin{proof}
	We will work in $\mathcal F_{k}$ as $\mathcal B_{k}$ is a quotient 
	of this algebra. We will also assume that $a$ is an atom in $\mathcal 
	F_{k-1}$ so that
	$$
		a=\bigwedge_{i=0}^{k-1}\varepsilon_{i}s_{i}\meet\bigwedge_{i=0}^{k-1}\delta_{i}t_{i}.
	$$
	Of course we have $\varepsilon_{0}=-1$ as $s_{0}=0$ in 
	$\mathcal B_{k-1}$.
	
	In $\mathcal F_{k}$ $a$ splits into four parts --
	$a_{00}=a\meet(s_{k}\meet t_{k})$, $a_{01}=a\meet(s_{k}\meet\comp 
	t_{k})$, $a_{10}=a\meet(\comp s_{k}\meet t_{k})$ and 
	$a_{11}=a\meet(\comp s_{k}\meet\comp t_{k})$. In $\mathcal B_{k}$ we 
	have $a_{01}=0$ as $s_{k}\le t_{k}$. We need to show that none of 
	the others are made zero in $\mathcal B_{k}$.
	
	If one of them -- call it $a_{pq}$ -- is zero in $\mathcal B_{k}$ then,  in $\mathcal F_{k}$ 
	it must be in the ideal generated by
	$S_{k}$.
	
	As $a_{pq}$ is an atom in $\mathcal F_{k}$ it must be the case that 
	$a_{pq}$ is smaller than something in $S_{k}$. This means that 
	$\delta_{0}=1$ as everything in $S_{k}$ is below $t_{0}$.
	
	Suppose that $a_{pq}\le t_{0}\meet\bigwedge_{j=1}^{i}\comp s_{j}\meet\comp 
	t_{i+1}$ for some $0\le i<k-1$. As $a_{pq}>0$ this means that 
	$\varepsilon_{j}=-1$ for $0\le j\le i$ and $\delta_{i+1}=-1$. But now 
	we have $a\le t_{0}\meet\bigwedge_{j=1}^{i}\comp s_{j}\meet\comp 
	t_{i+1}$ and so $a=0$ in $\mathcal B_{k-1}$ -- contradiction.
	
	Hence $a_{pq}\le t_{0}\meet\bigwedge_{j=1}^{k-1}\comp s_{j}\meet\comp 
	t_{k}$ or $a_{pq}\le t_{0}\meet\bigwedge_{j=1}^{k}\comp s_{j}$. 
	Either of these implies $\varepsilon_{j}=-1$ for $0\le j\le k-1$ and 
	so $a=0$ in $\mathcal B_{k-1}$ -- contradiction.
\end{proof}

Lastly we need to observe that the new atom $a_{k}=t_{0}\meet\bigwedge_{i=1}^{k-1}\comp s_{j}$
is not one of the atoms produced as in the last lemma -- since if $
a=\bigwedge_{i=0}^{k-1}\varepsilon_{i}s_{i}\meet\bigwedge_{i=0}^{k-1}\delta_{i}t_{i}$
is an atom of $\mathcal B_{k-1}$ and $a_{k}\le a$ in $\mathcal B_{k}$ 
then we have $a_{k}\meet a>0$ in $\mathcal F_{k}$ and so $a\le a_{k}$ 
in $\mathcal F_{k}$. But this implies $\delta_{0}=1$ and
$\varepsilon_{j}=-1$ for $0\le j\le k-1$ and 
	so $a=0$ in $\mathcal B_{k-1}$ -- contradiction.
	
The atoms as produced in the way described above fall into natural 
groupings. It helps to understand the counting arguments we give in 
the next section if we know how these groupings come about. 

\begin{defn}
	Let 
	\begin{align*}
		T_{00}&=\brk<\Set{1}, =>\\
		A_{00}&=\Set{1}\\
		T_{11}&=\brk<\Set{t_{0}}, =>\\
		A_{11}&=\Set{t_{0}}\\
		T_{01}&=\brk<\Set{\comp t_{0}, s_{1}, \comp t_{1}, \comp s_{1}\meet 
		t_{1}}, {\brk<s_{1}, \comp t_{0}>, \brk<\comp t_{1}, \comp t_{0}>, 
		\brk<\comp s_{1}\meet t_{1}, \comp t_{0}>}\cup=>\\
		A_{01}&=\Set{\comp t_{0}, s_{1}, \comp t_{1}, \comp s_{1}\meet 
		t_{1}}
	\end{align*}
	\begin{align*}
		T_{ij}&=
		\begin{cases}
			 \brk<\Set{t_{0}\meet\bigwedge_{l=1}^{i-1}\comp s_{l}}, => & i=j \\
			 \biggl\langle\biggl\{t_{0}\meet\bigwedge_{l=1}^{i-1}\comp s_{l}\meet s_{i}, 
			 t_{0}\meet\bigwedge_{l=1}^{i-1}\comp s_{l}\meet s_{i}\meet s_{i+1}, 
			 t_{0}\meet\bigwedge_{l=1}^{i-1}\comp s_{l}\meet s_{i}\meet\comp 
			 t_{i+1}, & \\
			 \qquad t_{0}\meet\bigwedge_{l=1}^{i-1}\comp s_{l}\meet s_{i}\meet(\comp 
			 s_{i+1}\meet t_{i+1})\biggr\}, 
			 \biggl\{
			 \brk<t_{0}\meet\bigwedge_{l=1}^{i-1}\comp s_{l}\meet s_{i}\meet s_{i+1}, 
			 t_{0}\meet\bigwedge_{l=1}^{i-1}\comp s_{l}\meet s_{i}>, & \\
			 \qquad\qquad
			 \brk<t_{0}\meet\bigwedge_{l=1}^{i-1}\comp s_{l}\meet s_{i}\meet\comp 
			 t_{i+1},t_{0}\meet\bigwedge_{l=1}^{i-1}\comp s_{l}\meet 
			 s_{i}>, &\\
			 \qquad\qquad
			 \brk<t_{0}\meet\bigwedge_{l=1}^{i-1}\comp s_{l}\meet s_{i}\meet(\comp 
			 s_{i+1}\meet t_{i+1}), t_{0}\meet\bigwedge_{l=1}^{i-1}\comp s_{l}\meet 
			 s_{i}>\biggr\}\cup=\biggr\rangle & j=i+1 \\
			 \biggl\langle T_{i, j-1}\cup\bigcup_{\alpha\in A_{i, j-1}}\Set{ 
			 \alpha\meet s_{j}, 
			 \alpha\meet\comp 
			 t_{j}, 
			 \alpha\meet(\comp 
			 s_{j}\meet t_{j})}, & \\
			 \qquad \le_{i, j-1}\cup\bigcup_{\alpha\in A_{i, j-1}}
			 \Set{
			 \brk<\alpha\meet s_{j}, 
			 \beta>, 
			 \brk<\alpha\meet\comp 
			 t_{j},\beta>,  
			 \brk<\alpha\meet(\comp 
			 s_{j}\meet t_{j}), \delta> | \alpha\le_{i, 
			 j-1}\beta}\cup=\biggr\rangle & j>i+1 
		\end{cases}\\ 
		A_{ij}&=
		\begin{cases}
			 \Set{t_{0}\meet\bigwedge_{l=1}^{i-1}\comp s_{l}} & j=i\\
			 \bigl\{t_{0}\meet\bigwedge_{l=1}^{i-1}\comp s_{l}\meet s_{i}, 
			 t_{0}\meet\bigwedge_{l=1}^{i-1}\comp s_{l}\meet s_{i}\meet s_{i+1}, 
			 t_{0}\meet\bigwedge_{l=1}^{i-1}\comp s_{l}\meet s_{i}\meet\comp 
			 t_{i+1}, & \\
			 \qquad\qquad
			 t_{0}\meet\bigwedge_{l=1}^{i-1}\comp s_{l}\meet s_{i}\meet(\comp 
			 s_{i+1}\meet t_{i+1})\bigr\} & j=i+1\\
			 T_{i, j-1}\cup\bigcup_{\alpha\in A_{i, j-1}}\Set{ 
			 \alpha\meet s_{j}, 
			 \alpha\meet\comp 
			 t_{j}, 
			 \alpha\meet(\comp 
			 s_{j}\meet t_{j})} & j>i+1 
		\end{cases}
	\end{align*}
\end{defn}

A few tree diagrams will help us see what this definition is really about.
\begin{center}
	\begin{longtable}{>{$}l<{$}!{:}p{10cm}}
		T_{00} & $\bullet\ 1$\\
		T_{11} & $\bullet\ t_{0}$ \\
		T_{01} & 
				\begin{minipage}[t]{10cm}
					\pstree[nodesep=2pt, levelsep=80pt]{\TR{$\comp t_{0}$}}{%
					\TR{$\comp t_{0}\meet s_{1}$}
					\TR{$\comp t_{0}\meet\comp t_{1}$}
					\TR{$\comp t_{0}\meet(\comp s_{1}\meet t_{1})$}}
				\end{minipage}\\
		 \vspace{3mm} & \\
		T_{22} & $\bullet\ t_{0}\meet\comp s_{1}$\\
		 \vspace{3mm} & \\
		T_{12} & \begin{minipage}[t]{10cm}
					\pstree[nodesep=2pt, levelsep=20pt]{\TR{$t_0\meet s_1$}}{%
					\TR{$t_0\meet s_1\meet s_{2}$}
					\TR{$t_0\meet s_1\meet\comp t_{2}$}
					\TR{$t_0\meet s_1\meet(\comp s_{2}\meet t_{2})$}}
				\end{minipage}\\
		 \vspace{5mm} & \\
		T_{02} & \begin{minipage}[t]{10cm}
					\pstree[nodesep=2pt, levelsep=80pt, treemode=R,  tnpos=r]{\Item{$\comp t_{0}$}}{%
					\pstree{\Item{$\comp t_{0}\meet s_{1}$}}{%
					\Item{$\comp t_{0}\meet s_{1}\meet s_{2}$}
					\Item{$\comp t_{0}\meet s_{1}\meet \comp t_{2}$}
					\Item{$\comp t_{0}\meet s_{1}\meet (\comp s_{2}\meet t_{2})$}}
					\pstree{\Item{$\comp t_{0}\meet\comp t_{1}$}}{%
					\Item{$\comp t_{0}\meet\comp t_{1}\meet s_{2}$}
					\Item{$\comp t_{0}\meet\comp t_{1}\meet \comp t_{2}$}
					\Item{$\comp t_{0}\meet\comp t_{1}\meet (\comp s_{2}\meet t_{2})$}}
					\pstree{\Item{$\comp t_{0}\meet(\comp s_{1}\meet t_{1})$}}{%
					\Item{$\comp t_{0}\meet(\comp s_{1}\meet t_{1})\meet s_{2}$}
					\Item{$\comp t_{0}\meet(\comp s_{1}\meet t_{1})\meet \comp t_{2}$}
					\Item{$\comp t_{0}\meet(\comp s_{1}\meet t_{1})\meet (\comp s_{2}\meet t_{2})$}}
					}
				\end{minipage}
	\end{longtable}
\end{center}
The reader is invited to produce the next layer of trees.

We note that $\bigcup_{j=0}^{k}A_{jk}$ is the set of all atoms of 
$\mathcal B_{k}$. 

\section{Counting Atoms and Other Things}
Now we want to count just how many atoms there are and in what 
locations they may be found. This leads to a calculation of the size 
of $\mathcal L(X)$.

\begin{defn}
	Let 
	$$
	\alpha_{k}(t)=\text{the number of atoms below }t\text{ in }\mathcal B_{k}.
	$$
\end{defn}

\begin{lem}
	$$
	\alpha_{k}(1)=\frac{1}{2}(3^{k+1}-1).
	$$
\end{lem}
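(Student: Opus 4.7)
My plan is to prove the formula by induction on $k$, using Lemmas \ref{lem:atomSplit} and \ref{lem:newAtom} together with the observation immediately after Lemma \ref{lem:atomSplit} to derive the recurrence $\alpha_k(1) = 3\alpha_{k-1}(1) + 1$, and then solving it.

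For the base case $k=0$, I would observe that $\mathcal{B}_0$ is generated by $s_0,t_0$ subject only to $Z\colon s_0\le 0$, $S_0\colon s_0\le t_0$, and $Q_0$, which reads $t_0\le 0$ because the disjunction $\bigvee_{j=1}^{0}s_j$ is empty and hence $0$. Thus $\mathcal{B}_0\cong\{0,1\}$ and $\alpha_0(1)=1=\tfrac{1}{2}(3^1-1)$.

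For the inductive step, assume $\alpha_{k-1}(1) = \tfrac{1}{2}(3^k-1)$. By Lemma \ref{lem:atomSplit}, each of the $\alpha_{k-1}(1)$ atoms of $\mathcal{B}_{k-1}$ produces exactly three pairwise distinct atoms in $\mathcal{B}_k$, giving $3\alpha_{k-1}(1)$ atoms. By Lemma \ref{lem:newAtom} and the observation following it, $a_k$ is a further atom of $\mathcal{B}_k$ not among any of these split-pieces. The remaining task is exhaustiveness: every atom of $\mathcal{B}_k$ is either a split-piece of some $\mathcal{B}_{k-1}$-atom or equals $a_k$. I would verify this by taking an arbitrary atom $c$ of $\mathcal{F}_k$ that survives in $\mathcal{B}_k$, projecting it to an atom $c'$ of $\mathcal{F}_{k-1}$ by dropping the $s_k,t_k$ factors, and noting that $c'$ is either nonzero in $\mathcal{B}_{k-1}$ (whence $c$ is one of the three split-pieces of $c'$) or is killed in $\mathcal{B}_{k-1}$ only via the relation $Q_{k-1}$ (since the only new ideal generators $u_k$ and $r_{k-1}$ already lie in the kernel of $\mathcal{B}_k\to\mathcal{F}_{k-1}$); in the latter case the calculation from the proof of Lemma \ref{lem:newAtom} pins $c=a_k$ uniquely. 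Thus $\alpha_k(1) = 3\alpha_{k-1}(1) + 1$, and a short computation gives
\[
\alpha_k(1) = 3\cdot\tfrac{1}{2}(3^k-1) + 1 = \tfrac{1}{2}(3^{k+1}-1),
\]
closing the induction.

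The only slightly delicate point is the exhaustiveness check, but its substance is already contained in the preceding two lemmas and the paragraph between them; the rest is routine arithmetic with a geometric series.
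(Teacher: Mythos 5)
Your proof is correct and follows essentially the same route as the paper: base case $\mathcal B_{0}=\mathbf 2$ with one atom, then the recurrence $\alpha_{k}(1)=3\alpha_{k-1}(1)+1$ from Lemmas \ref{lem:newAtom} and \ref{lem:atomSplit}. The extra exhaustiveness check you include is a reasonable filling-in of a step the paper leaves implicit, but it does not change the argument.
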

\begin{proof}
	$\mathcal B_{0}=\mathbf 2$ has one atom.
	
	From the lemmas \ref{lem:newAtom} and \ref{lem:atomSplit} we have $\alpha_{k}(1)=3\alpha_{k-1}(1)+1$ 
	from which we have the desired formula.
\end{proof}

\begin{lem}
	$$
	\alpha_{k}((\comp s_{j_{1}}\meet t_{j_{1}})\meet \dots\meet (\comp 
	s_{j_{n}}\meet t_{j_{n}}))=\frac12(3^{k+1-n}-1)
	$$
	if all the $\brk<s_{j_{i}}, t_{j_{i}}>$ are distinct.
\end{lem}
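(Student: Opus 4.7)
The plan is to induct on $k$, exploiting the atomic structure of $\mathcal B_k$ made explicit in the proofs of \lemref{lem:newAtom} and \lemref{lem:atomSplit}: every atom of $\mathcal B_k$ is either the new atom $a_k$ or has the form $a\meet c$ with $a$ an atom of $\mathcal B_{k-1}$ and $c\in\{s_k\meet t_k,\ \comp t_k,\ \comp s_k\meet t_k\}$. The case $n=0$ is supplied by the preceding lemma. The base $k=0$ is immediate, since $Z$ and $Q_0$ together force $s_0=t_0=0$ in $\mathcal B_0$, so either $n=0$ with $\alpha_0(1)=1$ or $n=1$, $j_1=0$ with $\alpha_0(0)=0$, both matching the formula.

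For the inductive step I would fix $t=\bigwedge_{i=1}^n(\comp s_{j_i}\meet t_{j_i})$ with distinct $j_i\in\{0,\dots,k\}$ and split on whether $k$ is one of the $j_i$.

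If $k=j_\ell$ for some $\ell$, write $t=t'\meet(\comp s_k\meet t_k)$ with $t'\in\mathcal B_{k-1}$ involving the remaining $n-1$ indices. An atom $a\meet c$ lies below $t$ iff $a\le t'$ and $c=\comp s_k\meet t_k$, yielding exactly one such atom of $\mathcal B_k$ per atom of $\mathcal B_{k-1}$ below $t'$; moreover $a_k\not\le t$ because $a_k\le s_k$ (this inequality is shown inside the proof of \lemref{lem:newAtom}). Hence $\alpha_k(t)=\alpha_{k-1}(t')$, which by induction equals $\tfrac12(3^{k-n+1}-1)$ as required.

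If $k\notin\{j_1,\dots,j_n\}$, then $t\in\mathcal B_{k-1}$, all three choices of $c$ preserve being below $t$, and $a_k\le t$: the proof of \lemref{lem:newAtom} gives $a_k\le t_i$ for every $0\le i\le k$, $a_k\le\comp s_i$ for $1\le i\le k-1$ by definition, and $a_k\le\comp s_0=1$ since $s_0=0$. Thus $\alpha_k(t)=3\alpha_{k-1}(t)+1=3\cdot\tfrac12(3^{k-n}-1)+1=\tfrac12(3^{k-n+1}-1)$. The only subtlety is the placement of $a_k$ relative to each $\comp s_i\meet t_i$, which falls straight out of the bounds derived in the proof of \lemref{lem:newAtom}; everything else is bookkeeping with the inductive hypothesis.
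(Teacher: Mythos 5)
Your proof is correct and follows essentially the same route as the paper's: induction on $k$ via the split-into-three/new-atom description of the atoms of $\mathcal B_{k}$, with the two cases according to whether $k$ occurs among the $j_{i}$. The only differences are cosmetic — you anchor the induction at $k=0$ rather than at $k=n-1$ (where the paper notes $t=0$ by $Q_{n-1}$), and you spell out why $a_{k}\le t$ when $k\notin\Set{j_{1},\dots,j_{n}}$, a point the paper merely asserts.
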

\begin{proof}
	The proof is the same as above -- the first case where the formula 
	makes sense is $\mathcal B_{n-1}$ and in this case 
	$(\comp s_{j_{1}}\meet t_{j_{1}})\meet \dots\meet (\comp 
		s_{j_{n}}\meet t_{j_{n}})\le t_{0}\meet\bigwedge_{i=0}^{n-1}\comp 
		s_{i} =0$
	and so there are no atoms below it,  and $0=\frac12(3^{(n-1)+1-n}-1)$.
	
	Let 
	$$
	t=(\comp s_{j_{1}}\meet t_{j_{1}})\meet \dots\meet (\comp 
	s_{j_{n}}\meet t_{j_{n}}).
	$$
	There are two cases in general -- 
	\begin{description}
		\item[No $j_{i}=k$] 
			Going from $\mathcal B_{k-1}$ to $\mathcal B_{k}$ the new atom
			is below $t$ and all other atoms split in three so we have 
			$\alpha_{k}(t)=3\alpha_{k-1}(t)+1$ which gives the desired formula.
		\item[Some $j_{i}=k$] 
		Without loss of generality $i=n$. 
			This case is different as the new atom $a_{k}$ in $\mathcal B_{k}$
			is not below $\comp s_{k}\meet t_{k}$. But then every atom in
			$\mathcal B_{k-1}$ splits into three, one of which is below
			$\comp s_{k}\meet t_{k}$. Therefore $\alpha_{k}((\comp s_{j_{1}}\meet t_{j_{1}})\meet \dots\meet (\comp 
	s_{j_{n-1}}\meet t_{j_{n-1}})\meet(\comp s_{k}\meet t_{k}))$ is equal to
	$\alpha_{k-1}((\comp s_{j_{1}}\meet t_{j_{1}})\meet \dots\meet (\comp 
		s_{j_{n-1}}\meet t_{j_{n-1}}))$ and so
		equals $\frac12(3^{(k-1)+1-n}-1)=\frac12(3^{k+1-(n+1)}-1)$.
	\end{description}
\end{proof}

As we are really interested in intervals we need the following 
observation
\begin{lem}
	Let $\mathcal B$ be a finite Boolean algebra, and $\mathbf x\in\rsf 
	I(\mathcal B)$. 
	Then the number of atoms in $[\mathbf x, \mathbf 1]$ is
	equal to the number of atoms in $B$ less the number of atoms below
	$\comp x_{0}\meet x_{1}$.
\end{lem}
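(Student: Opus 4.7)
The plan is to describe the covers of $\mathbf x = [x_0, x_1]$ in $\rsf I(\mathcal B)$ explicitly and then perform a simple count in the underlying Boolean algebra $\mathcal B$. Recall that $[a,b] \le [c,d]$ in $\rsf I(\mathcal B)$ means $c \le a$ and $b \le d$, so that an element $\mathbf y = [y_0, y_1]$ lies in $[\mathbf x, \mathbf 1]$ precisely when $y_0 \le x_0$ and $x_1 \le y_1$. An atom of $[\mathbf x, \mathbf 1]$ is an element strictly above $\mathbf x$ with no element strictly between. Since the Boolean order is atomistic, such a $\mathbf y$ must either agree with $\mathbf x$ in its second coordinate and strictly shrink the first, or agree in the first and strictly extend the second (changing both at once would factor through an intermediate cover).

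First I would argue that the atoms in $[\mathbf x, \mathbf 1]$ split into exactly two families:
\begin{enumerate}[(i)]
\item intervals of the form $[x_0 \meet \comp a,\, x_1]$ where $a$ is an atom of $\mathcal B$ with $a \le x_0$; and
\item intervals of the form $[x_0,\, x_1 \join a]$ where $a$ is an atom of $\mathcal B$ with $a \not\le x_1$.
\end{enumerate}
For family (i), $x_0 \meet \comp a \le x_0 \le x_1$ so the resulting interval is well-formed, and $x_0$ covers $x_0 \meet \comp a$ in $\mathcal B$ precisely when $a$ is an atom below $x_0$. Similarly for (ii), $x_0 \le x_1 \le x_1 \join a$ and $x_1 \join a$ covers $x_1$ exactly when $a$ is an atom not below $x_1$. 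Conversely any cover of $\mathbf x$ in $[\mathbf x, \mathbf 1]$ must arise in one of these two ways, since otherwise one could factor it through a proper intermediate interval by first changing the first coordinate only.

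Now I would simply count. Write $N$ for the total number of atoms of $\mathcal B$, and for $b \in \mathcal B$ write $\alpha(b)$ for the number of atoms below $b$. Family (i) contributes $\alpha(x_0)$ covers, family (ii) contributes $N - \alpha(x_1)$ covers, so the total is
\begin{equation*}
\alpha(x_0) + N - \alpha(x_1).
\end{equation*}
Because $x_0 \le x_1$, the atoms below $x_1$ partition as those below $x_0$ and those below $\comp x_0 \meet x_1$, giving $\alpha(x_1) = \alpha(x_0) + \alpha(\comp x_0 \meet x_1)$. Substituting, the number of atoms in $[\mathbf x, \mathbf 1]$ equals $N - \alpha(\comp x_0 \meet x_1)$, as claimed.

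The only delicate point is the classification of covers in step (i)/(ii); the rest is bookkeeping. I would expect the main obstacle to be convincing the reader that no cover of $\mathbf x$ can simultaneously shrink the left endpoint and grow the right, which I would handle by the factorization remark: if $y_0 < x_0$ and $x_1 < y_1$, then $[y_0, x_1]$ sits strictly between $\mathbf x$ and $[y_0, y_1]$, contradicting covering.
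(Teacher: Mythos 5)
Your proof is correct, but it takes a more hands-on route than the paper. You classify the atoms of $[\mathbf x, \mathbf 1]$ directly as covers of $\mathbf x$ --- shrinking the left endpoint by an atom below $x_{0}$, or extending the right endpoint by an atom below $\comp{x_{1}}$ --- justify that no cover can do both via the factorization through $[y_{0}, x_{1}]$, and then do the arithmetic $\alpha(x_{0})+(N-\alpha(x_{1}))=N-\alpha(\comp{x_{0}}\meet x_{1})$. The paper instead argues structurally: it observes that $[\mathbf x, \mathbf 1]\simeq[0, x_{0}]\times[x_{1}, 1]\simeq[0, x_{0}]\times[0, \comp{x_{1}}]$ as Boolean algebras, that $\mathcal B\simeq[0, x_{0}]\times[0, \comp{x_{1}}]\times[0, \comp{x_{0}}\meet x_{1}]$, and that atom counts add over finite products, so the result is immediate. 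The two arguments encode the same decomposition --- your two families of covers are exactly the atoms of the two product factors --- but the paper's version avoids any discussion of covers at the cost of asserting the isomorphisms, while yours makes the covering structure explicit at the cost of the small factorization argument you correctly flag as the delicate point. Both are complete; the paper's is shorter, yours is more self-contained for a reader who has not internalized the product structure of intervals in $\rsf I(\mathcal B)$.
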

\begin{proof}
	Let $\mathbf x=[x_{0}, x_{1}]$. We note that if
	$\mathcal A_{1}$ and $\mathcal A_{2}$ are two finite Boolean 
	algebras then the number of atoms in
	$\mathcal A_{1}\times\mathcal A_{2}$ equals the number of atoms in 
	$\mathcal A_{1}$ plus the number of atoms in $\mathcal A_{2}$.
	
	Since we have 
	\begin{align*}
		[\mathbf x, \mathbf 1]&\simeq [0, x_{0}]\times[x_{1}, 1]\\
		&\simeq [0, x_{0}]\times[0, \comp x_{1}]\\
		\intertext{and }
		\mathcal B&\simeq [0, x_{0}]\times[0, \comp x_{1}]\times[0, 
		\comp x_{0}\meet x_{1}]
	\end{align*}
	the result is immediate.
\end{proof}

Now we want to compute the size of $\mathcal L(X)$. As this is a 
union of interval algebras we will use an inclusion-exclusion 
calculation to find its size. We recall that in general for cubic 
algebras that
$$
\mathcal L_{a}\cap\mathcal L_{b}=\mathcal L_{a\join\Delta(a\join b, b)}
$$
is another interval algebra -- \cite{BO:eq} theorem 4.6. 
Using the last lemma it is relatively 
easy to compute the size of these intervals.
\begin{defn}
	Let $\mathbf x$ be an interval in $\mathcal B_{k}$.
	Let 
	\begin{align*}
		\alpha^{*}_{k}(\mathbf x)&=\text{the number of atoms below }\comp 
		x_{0}\meet x_{1};\\
		\alpha^{I}_{k}(\mathbf x)&=\text{the number of atoms in }[\mathbf x, 
		\mathbf 1].
	\end{align*}
\end{defn}

\begin{lem}
	Let $\mathbf x=[x_{0}, x_{1}]$ be an interval in a finite Boolean 
	algebra $\mathcal B$. Let $n$ be the number of atoms in $[\mathbf x, \mathbf 1]$. 
	Then
	$$
	\card{\rsf I(\mathcal B)_{\mathbf x}}=3^{n}.
	$$
\end{lem}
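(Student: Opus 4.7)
The plan is to identify $\rsf I(\mathcal B)_{\mathbf x}$ as an interval algebra on a Boolean algebra with $n$ atoms, and then count intervals by a direct atom-by-atom argument.

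First I would invoke Lemma \ref{lem:kl} to conclude that $\rsf I(\mathcal B)_{\mathbf x}$ is an atomic MR-algebra, hence isomorphic to $\rsf I(\mathcal D)$ for some Boolean algebra $\mathcal D$. The natural candidate is $\mathcal D = [\mathbf x, \mathbf 1]$ itself, treated as a Boolean algebra: the preceding lemma gives $[\mathbf x, \mathbf 1] \cong [0, x_{0}] \times [0, \comp x_{1}]$, which by assumption has $n$ atoms. Under the representation coming from \cite{BO:eq} underlying Lemma \ref{lem:kl}, the vertices of $\mathcal L_{\mathbf x}$ biject with the elements of this upper-segment Boolean algebra, pinning down the isomorphism type.

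Second I would count intervals in an arbitrary finite Boolean algebra $\mathcal D$ with $n$ atoms: for any interval $[a, b]$ with $a \le b$, each atom $\alpha$ of $\mathcal D$ lies in exactly one of the three mutually exclusive regions $\alpha \le a$, $\alpha \le b \meet \comp a$, or $\alpha \le \comp b$, and conversely every such assignment determines a unique interval. This gives $\card{\rsf I(\mathcal D)} = 3^{n}$, and combining with the first step yields $\card{\rsf I(\mathcal B)_{\mathbf x}} = 3^{n}$.

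The main obstacle is the identification in the first step; the counting in the second is purely combinatorial. A safe alternative is to compute $\card{\mathcal L_{\mathbf x}}$ directly: expand $\Delta(\mathbf y, \mathbf z)$ at the atom level of $\mathcal B$ and partition the atoms into those below $x_{0}$, those below $\comp x_{0} \meet x_{1}$, and those below $\comp x_{1}$; the middle block contributes forced data while each of the remaining $n$ atoms contributes a factor of $3$, again yielding $3^{n}$.
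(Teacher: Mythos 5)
Your proposal is correct and follows essentially the same route as the paper: both identify $\rsf I(\mathcal B)_{\mathbf x}$ with $\rsf I([\mathbf x, \mathbf 1])$, where $[\mathbf x, \mathbf 1]$ is a Boolean algebra with $n$ atoms, and then apply the standard count $\card{\rsf I(\mathcal D)}=3^{n}$ for a Boolean algebra $\mathcal D$ with $2^{n}$ elements. You merely spell out the two steps (the localization isomorphism and the atom-by-atom three-way partition) that the paper compresses into a single sentence.
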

\begin{proof}
	This is just a special case of the fact that if $\card{\mathcal 
	B}=2^{n}$ then $\card{\rsf I(\mathcal B)}=3^{n}$ as 
	$\rsf I(\mathcal B)_{\mathbf x}\simeq\rsf I([\mathbf x, \mathbf 1])$.
\end{proof}

Next the natural points of intersection.
\begin{comment}
\begin{defn}
	\begin{align*}
		\eta_{0}&=[s_{0}, t_{0}]=[0, t_{0}]\\
		\eta_{i+1}&=\eta_{i}\join\Delta(\eta_{i}\join[s_{i+1}, t_{i+1}], [s_{i+1}, t_{i+1}]).
	\end{align*}
\end{defn}

More generally we need 
\end{comment}
\begin{defn}
	Let $I\subseteq\Set{0, \dots, k}$. Let $i_{0}<\dots<i_{l}$ be an 
	increasing enumeration of $I$. Then
	\begin{align*}
		\eta^{I}_{0}&=[s_{i_{0}}, t_{i_{0}}]\\
		\eta^{I}_{j+1}&=\eta^{I}_{j}\join\Delta(\eta^{I}_{j}\join[s_{i_{j+1}}, 
		t_{i_{j+1}}], [s_{i_{j+1}}, 
		t_{i_{j+1}}])&&\text{ if }j<l\\
		\eta^{I}&=\eta^{I}_{l}.
	\end{align*}
\end{defn}

\begin{lem}
	Let $J\subseteq\Set{0, \dots, k}$.
	For all $i$ such that $0\le i<\card J$
	$$
		\eta^{J}_{i}=[s_{j_{0}}\meet\bigwedge_{p=1}^{i}(s_{j_{p}}\join\comp 
		t_{j_{p}}), 
		t_{j_{0}}\join\bigvee_{p=1}^{i}(\comp s_{j_{p}}\meet t_{j_{p}})].
	$$
\end{lem}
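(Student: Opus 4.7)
The plan is to prove this by induction on $i$, using only the explicit formulas for $\join$ and $\Delta$ in $\rsf I(\mathcal B_k)$ given in \thmref{def:intAlg} together with the distributive law in Boolean algebras. The base case $i=0$ is immediate, since both empty meets and empty joins drop out and $\eta^{J}_{0}=[s_{j_{0}},t_{j_{0}}]$ by definition.

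For the inductive step, write the current interval as $\eta^{J}_{i}=[a_{0},a_{1}]$ where $a_{0}=s_{j_{0}}\meet\bigwedge_{p=1}^{i}(s_{j_{p}}\join\comp t_{j_{p}})$ and $a_{1}=t_{j_{0}}\join\bigvee_{p=1}^{i}(\comp s_{j_{p}}\meet t_{j_{p}})$, and set $[u,v]=[s_{j_{i+1}},t_{j_{i+1}}]$. First I would compute $\eta^{J}_{i}\join[u,v]=[a_{0}\meet u,\,a_{1}\join v]$, and then apply the $\Delta$ formula to obtain
\[
\Delta(\eta^{J}_{i}\join[u,v],[u,v])=\bigl[(a_{0}\meet u)\join(a_{1}\meet\comp v),\ (a_{1}\join v)\meet(a_{0}\join\comp u)\bigr],
\]
using the simplifications $(a_{1}\join v)\meet\comp v=a_{1}\meet\comp v$ and $(a_{0}\meet u)\join\comp u=a_{0}\join\comp u$. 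Finally, joining this back with $[a_{0},a_{1}]$ and repeatedly using $a_{0}\le a_{1}$ together with distributivity gives the lower endpoint $a_{0}\meet(u\join\comp v)$ (since $a_{0}\meet(a_{1}\meet\comp v)=a_{0}\meet\comp v$) and the upper endpoint $a_{1}\join(v\meet\comp u)$ (applying $a_{1}\join(x\meet y)=(a_{1}\join x)\meet(a_{1}\join y)$). Substituting $u=s_{j_{i+1}}$ and $v=t_{j_{i+1}}$ extends the meet and the join on the respective sides by exactly one new term, which is precisely the claimed formula at $i+1$.

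The main obstacle is purely bookkeeping: the formula for $\Delta$ produces four compound terms that must be simplified using $a_{0}\le a_{1}$ and Boolean identities of the form $x\meet\comp x=0$, and then the outer join has to be rewritten as a single interval whose endpoints have the desired closed form. There is no real conceptual difficulty once one commits to carrying out these simplifications carefully; the induction then goes through cleanly because the recursive definition of $\eta^{J}_{j+1}$ exactly matches this one-step computation.
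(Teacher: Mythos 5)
Your proposal is correct and follows essentially the same route as the paper: induction on $i$, computing $\eta^{J}_{i}\join[s_{j_{i+1}},t_{j_{i+1}}]$, applying the explicit $\Delta$ formula, and simplifying the final join using $a_{0}\le a_{1}$ and distributivity to extend each endpoint by one term. The simplifications you identify are exactly the ones the paper carries out.
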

\begin{proof}
	The proof is by induction on $i$ -- it is clearly true for $i=0$.
	The superscript $J$ will be suppressed.
	Let 
	\begin{align*}
		a_{i}&=s_{j_{0}}\meet\bigwedge_{p=1}^{i}(s_{j_{p}}\join\comp 
		t_{j_{p}})\\
		b_{i}&=t_{j_{0}}\join\bigvee_{p=1}^{i}(\comp s_{j_{p}}\meet 
		t_{j_{p}})\\
		\eta_{i+1}&=\eta_{i}\join\Delta(\eta_{i}\join[s_{j_{i+1}}, t_{j_{i+1}}], 
		[s_{j_{i+1}}, t_{j_{i+1}}])\\
		&=[a_{i}, b_{i}]\join\Delta([a_{i}\meet s_{j_{i+1}}, b_{i}\join 
		t_{j_{i+1}}], [s_{j_{i+1}}, t_{j_{i+1}}])\\
		&=[a_{i}, b_{i}]\join[(a_{i}\meet s_{j_{i+1}})\join
		(b_{i}\meet\comp t_{j_{i+1}}), (b_{i}\join 
		t_{j_{i+1}})\meet(a_{i}\join\comp s_{j_{i+1}})]\\
		&=[a_{i}\meet(s_{j_{i+1}}\join\comp t_{j_{i+1}}), b_{i}\join(\comp 
		s_{j_{i+1}}\meet t_{j_{i+1}})]\\
		&=[a_{i+1}, b_{i+1}].
	\end{align*}
\end{proof}

The next step is to compute the number of atoms in $\eta^{J}_{i}$ i.e. 
below $\comp a_{i}\meet b_{i}$. 
\begin{lem}
	$$
	\alpha^{I}_{k}(\eta^{J}_{i})=3^{k}\left(\frac{2}{3}\right)^{i}.
	$$
\end{lem}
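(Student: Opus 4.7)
The plan is to combine the explicit formula for $\eta^{J}_{i}=[a_{i},b_{i}]$ obtained in the previous lemma with (a) the lemma that counts atoms in $[\mathbf{x},\mathbf{1}]$ as the total number of atoms minus those below $\comp x_{0}\meet x_{1}$, and (b) the earlier atom-count $\alpha_{k}(\comp s_{j_{1}}\meet t_{j_{1}}\meet\dots\meet\comp s_{j_{n}}\meet t_{j_{n}})=\tfrac12(3^{k+1-n}-1)$. Then finish by inclusion–exclusion and the binomial theorem.

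\textbf{Step 1.} Using the formulas
$a_{i}=s_{j_{0}}\meet\bigwedge_{p=1}^{i}(s_{j_{p}}\join\comp t_{j_{p}})$ and $b_{i}=t_{j_{0}}\join\bigvee_{p=1}^{i}(\comp s_{j_{p}}\meet t_{j_{p}})$, I would compute
\[
\comp a_{i}\meet b_{i}=\bigl(\comp s_{j_{0}}\join W\bigr)\meet\bigl(t_{j_{0}}\join W\bigr)=W\join(\comp s_{j_{0}}\meet t_{j_{0}})=\bigvee_{p=0}^{i}(\comp s_{j_{p}}\meet t_{j_{p}}),
\]
where $W=\bigvee_{p=1}^{i}(\comp s_{j_{p}}\meet t_{j_{p}})$ and I used the identity $(A\join W)\meet(C\join W)=W\join(A\meet C)$.

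\textbf{Step 2.} By the previous atom-counting lemma, for any nonempty $S\subseteq\{0,\dots,i\}$,
\[
\alpha_{k}\!\Bigl(\bigwedge_{p\in S}(\comp s_{j_{p}}\meet t_{j_{p}})\Bigr)=\tfrac12\bigl(3^{k+1-\card S}-1\bigr).
\]
So inclusion–exclusion gives
\[
\alpha^{*}_{k}(\eta^{J}_{i})=\sum_{n=1}^{i+1}\binom{i+1}{n}(-1)^{n+1}\cdot\tfrac12\bigl(3^{k+1-n}-1\bigr).
\]

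\textbf{Step 3.} I would split this into two sums. The ``$-1$'' piece equals $\tfrac12\sum_{n=1}^{i+1}\binom{i+1}{n}(-1)^{n+1}=\tfrac12$. For the $3^{k+1-n}$ piece, I factor out $3^{k+1}$ and use the binomial theorem:
\[
\sum_{n=1}^{i+1}\binom{i+1}{n}(-1)^{n+1}3^{-n}=-\bigl[(1-\tfrac13)^{i+1}-1\bigr]=1-\bigl(\tfrac23\bigr)^{i+1}.
\]
Multiplying by $\tfrac12\cdot 3^{k+1}$ and combining,
\[
\alpha^{*}_{k}(\eta^{J}_{i})=\tfrac12\bigl(3^{k+1}-1\bigr)-3^{k-i}\cdot 2^{i}.
\]

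\textbf{Step 4.} Since $\alpha_{k}(1)=\tfrac12(3^{k+1}-1)$, the atom-counting lemma for intervals gives
\[
\alpha^{I}_{k}(\eta^{J}_{i})=\alpha_{k}(1)-\alpha^{*}_{k}(\eta^{J}_{i})=3^{k-i}\cdot 2^{i}=3^{k}\bigl(\tfrac23\bigr)^{i},
\]
as required. The main obstacle is the Boolean simplification in Step 1 (where one must notice that the big joins inside $a_{i}$ and $b_{i}$ conspire to cancel the $s_{j_{p}}\join\comp t_{j_{p}}$ factors), and then the bookkeeping of the inclusion–exclusion; everything else is routine once the binomial identity $(1-\tfrac13)^{i+1}=(\tfrac23)^{i+1}$ is used to collapse the expression.
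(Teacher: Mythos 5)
Your proposal is correct and follows the same route as the paper: identify the atoms below $\comp{a_{i}}\meet b_{i}$ as those below $\bigvee_{p=0}^{i}(\comp{s_{j_{p}}}\meet t_{j_{p}})$, apply inclusion--exclusion with the earlier count $\alpha_{k}(\bigwedge_{p\in S}(\comp{s_{j_{p}}}\meet t_{j_{p}}))=\frac12(3^{k+1-\card S}-1)$, collapse the sum with the binomial theorem, and subtract from $\alpha_{k}(1)$. Your Step 1 merely makes explicit a Boolean simplification that the paper leaves implicit when it passes directly to the set $S_{k,i,J}$; otherwise the two arguments coincide.
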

\begin{proof}
	This is an inclusion-exclusion argument -- let 
	$S_{k, i, J}=\Set{\comp s_{j_{p}}\meet t_{j_{p}} | 0\le p\le i}$ for all 
	$J\subseteq\Set{0, \dots, k}$ and $i<\card J$. Again the superscript $J$ is omitted. 
	
	Then inclusion-exclusion gives us
	\begin{align*}
		\alpha^{*}(\eta_{i})&=\sum_{j=1}^{i+1}(-1)^{j+1}\left(\sum_{%
		\substack{A\subseteq S_{k, i, J}\\
		\card A=j}}%
		\alpha_{k}(\bigwedge A)\right)\\
		&=\sum_{j=1}^{i+1}\binom{i+1}j (-1)^{j+1}\frac12(3^{k+1-j}-1)\\
		&=\frac12(3^{k+1}-1)-\frac12\sum_{j=0}^{i+1}\binom{i+1}j 
		(-1)^{j}(3^{k+1-j}-1)\\
		\sum_{j=0}^{i+1}\binom{i+1}j (-1)^{j}(3^{k+1-j}-1)&=
		\sum_{j=0}^{i+1}\binom{i+1}j 
		(-1)^{j}3^{k+1-j}-\sum_{j=0}^{i+1}\binom{i+1}j (-1)^{j}\\
		&=3^{k+1}\sum_{j=0}^{i+1}\binom{i+1}j\left(-\frac{1}{3}\right)^{j}-
		\sum_{j=0}^{i+1}\binom{i+1}j (-1)^{j}\\
		&=3^{k+1}\left(1-\frac13\right)^{i+1}-(1-1)^{i+1}\\
		&=3^{k+1}\left(\frac23\right)^{i+1}.\\
		\intertext{Thus}
		\alpha^{I}_{k}(\eta_{i})&=\alpha_{k}(1)-\alpha^{*}(\eta_{i})\\
		&=\frac12(3^{k+1}-1)-\left[\frac12(3^{k+1}-1)-3^{k}\left(\frac23\right)^{i}\right]\\
		&=3^{k}\left(\frac23\right)^{i}.
	\end{align*}
\end{proof}

\begin{defn}
	Let
	$$
	\Phi(k, l)=3^{k}\left(\frac23\right)^{l}.
	$$
\end{defn}

Now at last we are able to compute the size of $\mathcal 
L(X)=\bigcup_{i=0}^{k}\mathcal L^{(k)}_{[s_{i}, t_{i}]}$.

For ease of reading let $\mathcal M_{i}=\mathcal L^{(k)}_{[s_{i}, t_{i}]}$.
Then inclusion-exclusion gives us that
\begin{align*}
	\card{\mathcal L(X)}&=\sum_{i=1}^{k+1}(-1)^{i+1}\left(\sum_{%
	\substack{A\subseteq\Set{0, \dots, k}\\
	\card A=i}}\card{\bigcap_{j\in A}\mathcal M_{j}}\right)\\
	&=\sum_{i=1}^{k+1}\binom{k+1}i (-1)^{i+1}3^{\Phi(k, i-1)}.
\end{align*}

\section{The other direction}
Now we turn to showing that the algebra we have constructed 
is the free cubic implication algebra. Since we know that the free algebra embeds 
into $\mathcal L(X)$,  it suffices to show that $\mathcal L(X)$ is generated by the intervals 
$I_{i}=[s_{i}, t_{i}]$ for $i=0, \dots, k$ using only cubic 
operations. In fact it suffices only to show that
the elements covering the $I_{i}$ are all cubically generated as 
these are atoms of $[I_{i}, \mathbf 1]$ and so generate $[I_{i}, 
\mathbf 1]$ with only joins. All other elements in $\mathcal L_{I_{i}}$ 
are then $\Delta$-images of two elements of $[I_{i}, \mathbf 1]$ and so
all of $\mathcal L(X)$ is obtained.

Let $\mathcal F_{k}$ now denote the subalgebra of $\mathcal L(X)$ 
generated by $\Set{ I_{i} | 0\le i\le k}$. We are trying to show that 
$\mathcal F_{k}=\mathcal L(X)$. 

We start with the easiest case.
\begin{lem}
	Every possible atom above $I_{0}$ is in $\mathcal F_{k}$.
\end{lem}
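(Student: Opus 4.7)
My plan is to exhibit each atom above $I_0$ as an explicit cubic term in the generators. An atom covering $I_0=[0,t_0]$ in $\mathcal L(X)$ is an interval $[0,t_0\join a]$ with $a$ an atom of $\mathcal B_k$ below $\comp{t_0}$. By the classification in Section~4 each such $a$ has the form $\comp{t_0}\meet\epsilon_1\meet\cdots\meet\epsilon_k$ with $\epsilon_j\in\{s_j,\comp{t_j},\comp{s_j}\meet t_j\}$, and by distributing $t_0\join a=t_0\join\bigwedge_j\epsilon_j$, so there are $3^k$ such atoms to produce.

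The first step is to verify the identity $YI_0=[0,\,t_0\join\comp{y_2}]$ valid for every interval $Y=[y_1,y_2]$ in $\mathcal L^{(k)}$; this is a short direct calculation from the interval-algebra formulas for $\join$, $\Delta$, and the derived implication. This identity reduces the problem to finding, for each tuple $(\epsilon_1,\ldots,\epsilon_k)$, a cubic term $Y\in\mathcal F_k$ whose upper endpoint $y_2$ satisfies $\comp{y_2}\meet\comp{t_0}=\bigwedge_j\epsilon_j\meet\comp{t_0}$.

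The second step is to take $Y=\bigvee_{j=1}^{k}Y_j$, so that $\comp{y_2}=\bigwedge_j\comp{y_{2,j}}$, and to choose each $Y_j$ from a short menu according to the value of $\epsilon_j$: $Y_j=I_j$ if $\epsilon_j=\comp{t_j}$, $Y_j=\Delta(\one,I_j)$ if $\epsilon_j=s_j$, and $Y_j=(I_jI_0)\join(\Delta(\one,I_j)I_0)$ if $\epsilon_j=\comp{s_j}\meet t_j$. In the first two cases $\comp{y_{2,j}}$ equals $\epsilon_j$ on the nose; in the third case $\comp{y_{2,j}}=\comp{t_0}\meet\epsilon_j$, but the extra $\comp{t_0}$-factor is harmlessly absorbed when we apply the outer $t_0\join(\cdot)$ from the key identity.

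The main obstacle is precisely this third case, $\epsilon_j=\comp{s_j}\meet t_j$: no single cubic term in $I_j$ alone produces an interval with upper endpoint $s_j\join\comp{t_j}$, so one must bring $I_0$ into play via both implications $I_jI_0=[0,\,t_0\join\comp{t_j}]$ and $\Delta(\one,I_j)I_0=[0,\,t_0\join s_j]$ and then take their join. Once these three case constructions are in hand, one final calculation with the key identity confirms $YI_0=[0,\,t_0\join\bigwedge_j\epsilon_j]$, showing that every atom above $I_0$ lies in $\mathcal F_k$.
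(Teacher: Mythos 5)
Your proposal is correct and takes essentially the same route as the paper: both arguments generate $[0,t_{0}\join t_{i}]$ and $[0,t_{0}\join s_{i}]$ from $I_{0}$ and $I_{i}$ via joins and the implication, and then use the Boolean structure of $[I_{0},\one]$ --- relative complementation through the identity $YI_{0}=[0,t_{0}\join\comp{y_{2}}]$ plus De Morgan --- to assemble each atom $[0,t_{0}\join a]$. The only difference is that you package as a single explicit term the ``desired meets and complements'' that the paper's proof leaves implicit.
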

\begin{proof}
	The atoms above $I_{0}$ are of the form $[0, t_{0}\join a]$ where 
	$a\le\comp t_{0}$ is an atom of $\mathcal B_{2}$.
	
	To see we get these we note that 
	\begin{align*}
		[0, t_{0}]\join[s_{i}, t_{i}]&=[0, t_{0}\join t_{i}]\\
		\text{and}\qquad
		([0, t_{0}]\join[\comp t_{i}, \comp s_{i}])\to[0, t_{0}]&=
		[0, t_{0}\join\comp s_{i}]\to[0, t_{0}]\\
		&=[0, t_{0}\join s_{i}].
	\end{align*}
	If $a\le \comp t_{0}$ is any atom then 
	$a=\comp 
	t_{0}\meet\bigwedge_{i}\varepsilon_{i}s_{i}\meet\bigwedge_{i}\delta_{i}t_{i}$
	and therefore
	$t_{0}\join a= 
	\bigwedge_{i}(\varepsilon_{i}s_{i}\join t_{0})\meet
	\bigwedge_{i}(\delta_{i}t_{i}\join t_{0})$ and as we have 
	each $[0, t_{0}\join t_{i}]$ and $[0, t_{0}\join s_{i}]$ we get the 
	desired meets and complements and hence $[0, t_{0}\join a]$.
\end{proof}

The rest of the proof consists of carefully showing that all the 
atoms in $[I_{i}, \mathbf 1]$ are obtained. 

\begin{defn}
	Let $a\in\mathcal B_{k}$ be an atom and $[x, y]\in\rsf I(\mathcal 
	B_{k})$. Then
	\begin{enumerate}[(a)]
	\item
		$a$ is \emph{left-associated with $[x, y]$} iff $a\le x$.
	\item
		$a$ is \emph{right-associated with $[x, y]$} iff $a\le\comp y$.
	\item 
		$a$ is \emph{associated with $[x, y]$} iff $a\le x$ or $a\le\comp y$.
	\end{enumerate}
\end{defn}

The idea of association is that an atom of $[[x, y], \mathbf 1]$ is 
either $[x\meet\comp a, y]$ for some $a$ an atom below $x$, 
or $[x, y\join a]$ for some atom $a\le\comp y$. Thus $a$ is 
associated with $[x, y]$ iff $a$ produces an atom in $[[x, y], \mathbf 
1]$.

\begin{lem}\label{lem:assocBelow}
	Let $a$ be associated with $[x, y]\geq[u, v]$. Then $a$ is associated 
	with $[u, v]$.
\end{lem}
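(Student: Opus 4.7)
The plan is to unpack the order on the interval algebra and then check the two cases in the definition of association. Recall that $\rsf{I}(\mathcal{B}_{k})$ is ordered by inclusion with $\one = [0,1]$ at the top, so $[x,y] \geq [u,v]$ is equivalent to the inclusion $[u,v] \subseteq [x,y]$ in $\mathcal{B}_{k}$, i.e., $x \leq u$ and $v \leq y$. Taking complements in the second inequality gives $\comp y \leq \comp v$.

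Now suppose $a$ is associated with $[x,y]$, so by definition either $a \leq x$ or $a \leq \comp y$. In the first case, transitivity with $x \leq u$ yields $a \leq u$, showing that $a$ is left-associated with $[u,v]$. In the second case, transitivity with $\comp y \leq \comp v$ yields $a \leq \comp v$, showing that $a$ is right-associated with $[u,v]$. Either way $a$ is associated with $[u,v]$, which is the desired conclusion.

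There is no real obstacle here; the lemma is essentially a reformulation of the fact that the endpoints of intervals move monotonically (in opposite directions) when one passes to a larger interval in $\rsf{I}(\mathcal{B}_{k})$. The only care needed is in remembering that the ambient order on intervals is reverse inclusion of endpoints on the right: $y \leq v$ rather than $v \leq y$. Once this is noted the two-case argument is immediate and no further machinery from earlier in the paper is required.
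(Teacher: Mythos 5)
Your proof is correct and is essentially identical to the paper's own one-line argument: unpack $[u,v]\le[x,y]$ as $x\le u$ and $v\le y$, complement the second inequality, and apply transitivity in each of the two cases of the definition. One small caution: your closing aside has the direction backwards --- it is the \emph{left} endpoints whose order reverses ($x\le u$ for the larger interval $[x,y]$), while the right endpoints satisfy $v\le y$ exactly as your main argument correctly uses, so the remark should be discarded or fixed even though it does not affect the proof.
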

\begin{proof}
	$[u, v]\le[x, y]$ iff $x\le u$ and $v\le y$ iff $x\le u$ and $\comp 
	y\le\comp v$. The result is now clear.
\end{proof}

\begin{lem}
	Let $a$ be an atom of $\mathcal B_{k}$ and $[x, y]$,  $[u, v]$ be two 
	intervals. Then
	\begin{enumerate}[(a)]
		\item  $a$ is left-associated with $[x, y]\join[u, v]$ iff
		$a$ is left-associated with both $[x, y]$ and $[u, v]$.
	
		\item  $a$ is right-associated with $[x, y]\join[u, v]$ iff
		$a$ is right-associated with both $[x, y]$ and $[u, v]$.
	\end{enumerate}
\end{lem}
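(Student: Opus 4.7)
The plan is to unfold the definition of join in the interval algebra $\rsf I(\mathcal{B}_k)$ and apply elementary Boolean identities; no use of atomicity of $a$ will actually be required.

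Recall from the definition of the interval algebra (Example \ref{def:intAlg}) that
\[
[x,y]\join[u,v] = [x\meet u,\ y\join v].
\]
So the left endpoint of the join is $x\meet u$ and the right endpoint is $y\join v$.

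For part (a), I would observe that $a$ is left-associated with $[x,y]\join[u,v]$ iff $a\le x\meet u$, and in any Boolean algebra this is equivalent to the conjunction $a\le x$ and $a\le u$, i.e., $a$ is left-associated with each of $[x,y]$ and $[u,v]$. For part (b), I would use De Morgan: $a$ is right-associated with $[x,y]\join[u,v]$ iff $a\le\comp{y\join v}=\comp y\meet\comp v$, which is equivalent to $a\le\comp y$ and $a\le\comp v$, i.e., right-association with both intervals.

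Each direction is an iff built directly from one Boolean identity, so there is no real obstacle; the only thing to be careful about is not conflating the two notions and keeping the order-reversal under complementation correctly lined up in part (b). The proof will be two or three lines long.
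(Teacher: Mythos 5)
Your proposal is correct and matches the paper's own proof essentially verbatim: both unfold the join as $[x\meet u,\ y\join v]$ and reduce each part to the Boolean facts that $a\le x\meet u$ iff $a\le x$ and $a\le u$, and $a\le\comp{y\join v}$ iff $a\le\comp y$ and $a\le\comp v$. Nothing further is needed.
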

\begin{proof}
	The result is immediate as $a\le x\meet u$ iff $a\le x$ and $a\le u$;
	and $a\le\comp{y\join v}$ iff $a\le \comp y$ and $a\le\comp v$.
\end{proof}

\begin{lem}\label{lem:assocDelta}
		Let $a$ be an atom of $\mathcal B_{k}$ and $[x, y]$ be an 
	interval. Then $a$ is associated with $[x, y]$ iff
		$a$ is associated with $\Delta(\mathbf 1, [x, y])$.
\end{lem}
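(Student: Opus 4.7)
The plan is to reduce the statement to a direct computation in the interval algebra, since the formula for $\Delta$ acting on $\mathbf{1}$ has a particularly clean form.

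First I would apply the definition of $\Delta$ in $\rsf I(\mathcal B_k)$ from Example~\ref{def:intAlg}. With $\mathbf{1} = [0, 1]$ and the interval $[x, y]$, plugging into
\[
\Delta([a, b], [c, d]) = [a\join(b\meet\comp d),\ b\meet(a\join\comp c)]
\]
with $[a, b] = [0, 1]$ and $[c, d] = [x, y]$ immediately yields $\Delta(\mathbf{1}, [x, y]) = [\comp y, \comp x]$.

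Next I would unfold the definition of ``associated'' on both sides. By definition, $a$ is associated with $[x, y]$ iff $a \le x$ or $a \le \comp y$. For the right-hand side, $a$ is associated with $[\comp y, \comp x]$ iff $a \le \comp y$ or $a \le \comp{\comp x}$; since $\comp{\comp x} = x$ in the Boolean algebra $\mathcal B_k$, this is the condition $a \le \comp y$ or $a \le x$. The two conditions are identical, which proves the iff.

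There is no real obstacle here; once one observes that $\Delta(\mathbf{1}, -)$ acts as the ``flip'' $[x, y] \mapsto [\comp y, \comp x]$ on intervals, the lemma is an immediate consequence of the symmetric form of the definition of association (which treats $x$ and $\comp y$ on equal footing). The only thing to be slightly careful about is not to conflate left- and right-association individually: the flip exchanges left-association with right-association, so it is only the combined notion of association that is preserved by $\Delta(\mathbf{1}, -)$.
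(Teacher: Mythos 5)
Your proposal is correct and matches the paper's own proof: both compute $\Delta(\mathbf 1, [x, y])=[\comp y, \comp x]$ from the definition and then observe that association with $[\comp y, \comp x]$ unfolds to the same disjunction $a\le x$ or $a\le\comp y$. Your closing remark that the flip swaps left- and right-association (so only the combined notion is invariant) is a correct and worthwhile observation that the paper leaves implicit.
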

\begin{proof}
	$\Delta(1, [x, y])=[\comp y, \comp x]$ so that
	$a\le x$ iff $a\le\comp{\comp x}$, and $a\le\comp y$ iff 
	$a\le\comp y$.
\end{proof}

Our intent is to show that all the desired atoms in $[I_{i}, \mathbf 
1]$ are cubically generated from the $I_{j}$. Because we need to come 
back to this point so often we have the following definition.
\begin{defn}
	Let $a$ be an atom of $\mathcal B_{k}$ associated with an interval 
	$[x, y]\in\mathcal L(X)$. Then $a$ is \emph{cubically assigned to $[x, 
	y]$} iff 
	\begin{enumerate}[-]
			\item  $a\le x$ and $[x\meet\comp a, y]$ is 
			cubically generated from the $I_{j}$; or
		
			\item  $a\le \comp y$ and $[x, y\join a]$ is
			cubically generated from the $I_{j}$.
		\end{enumerate}
\end{defn}

Our task is made a little easier by the fact that we only need to 
show a cubic assignment once in order to get enough cubic 
assignments.
\begin{lem}
	Let $a$ be an atom of $\mathcal B_{k}$ associated with $[x, y]$ and 
	$[u, v]$ both in $\mathcal F_{k}$. Suppose that $a$ is cubically 
	assigned to $[x, y]$.% and $[u, v]$. 
	Then $a$ is cubically assigned to $[u, v]$.
\end{lem}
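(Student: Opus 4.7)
The plan is to exhibit $[u\meet\comp a, v]$ (or the analogous right-enrichment of $[u,v]$) as an explicit cubic expression in the three intervals $[x,y]$, $[x\meet\comp a, y]$ and $[u,v]$, all of which already lie in $\mathcal F_{k}$.

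First, a reduction. Because $\mathcal F_{k}$ is closed under $\Delta(\mathbf 1, \cdot)$ and this operation interchanges left- and right-association of any atom with any interval (Lemma~\ref{lem:assocDelta}), we may assume without loss of generality that $a$ is left-associated with both $[x,y]$ and $[u,v]$: so $a\le x$, $a\le u$, the hypothesis reads $[x\meet\comp a, y]\in\mathcal F_{k}$, and the target is $[u\meet\comp a, v]\in\mathcal F_{k}$.

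The key step is to form the two joins
\[
P=[x\meet\comp a, y]\join[u,v]=[x\meet u\meet\comp a,\,y\join v],\qquad
Q=[x,y]\join[u,v]=[x\meet u,\,y\join v],
\]
both in $\mathcal F_{k}$, and to evaluate the derived operation $PQ=\Delta(\mathbf 1,\Delta(P\join Q,Q))\join Q$. Since $Q\le P$ one has $P\join Q=P$; a direct computation, using $a\le x\meet u$ (so that $(x\meet u\meet\comp a)\join\comp{(x\meet u)}=\comp a$) and $x\le y$ (so that $x\meet\comp y=0$), collapses the whole expression to the strikingly simple value $PQ=[a,\mathbf 1]$, independent of the witness intervals. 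The main obstacle is verifying this identity; its content is that the witness $[x\meet\comp a, y]$ encodes the atom $a$ in a position-independent way.

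Once $[a,\mathbf 1]\in\mathcal F_{k}$ is in hand, the rest is routine. Since $[u,v]\le[a,\mathbf 1]$, the $\Delta$-formula gives $\Delta([a,\mathbf 1],[u,v])=[a\join\comp v,\,a\join\comp u]\in\mathcal F_{k}$; applying $\Delta(\mathbf 1,\cdot)$ yields $[u\meet\comp a,\,v\meet\comp a]\in\mathcal F_{k}$; and joining this with $[u,v]$ produces $[u\meet\comp a, v]\in\mathcal F_{k}$, as required.
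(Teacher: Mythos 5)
Your proof is correct and follows essentially the same route as the paper's: both reduce to the left-associated case via $\Delta(\one,\cdot)$ and then work with the two joins $[x\meet\comp a, y]\join[u,v]$ and $[x,y]\join[u,v]$. The only real difference is in the last step, where the paper transfers the atom to $[u,v]$ in one stroke via the relative implication $[x\meet u, y\join v]\xrightarrow{[x\meet u\meet\comp a, y\join v]}[u, v]$, while you make that transfer explicit by first extracting $[a,\one]$ as the value of the derived operation $PQ$ and then adjoining $a$ to $[u,v]$ by hand --- a correct and more self-contained computation.
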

\begin{proof}
	First it is easy to see that $a$ is cubically assigned to $[x, y]$ 
	iff $a$ is cubically assigned to $\Delta(\mathbf 1, [x, y])$.
	
	This means that we need only deal with the case that $a$ is 
	left-associated with both $[x, y]$ and $[u, v]$. 
	
	Since $a$ is left-associated with both intervals it is also 
	left-associated with $[x\meet u, y\join v]$ and so
	$[x\meet u\meet\comp a, y\join v]>[x\meet u, y\join v]$. Then we can 
	form\\
	$[x\meet u, y\join v]\xrightarrow{[x\meet u\meet\comp a, y\join 
	v]}\relax[u, 
	v] =
	[u\meet\comp a, v]$.
\end{proof}

The next step is to show that each atom does get cubically assigned 
to some interval in $\mathcal F_{k}$.

Let $k\in\N$ and $\rsf S(k)$ be the signed set algebra on $\Set{0, 
\dots, k}$.
We define a function 
$R\colon\Set{\text{atoms of }\mathcal B_{k}}\to\rsf S(k)$ by 
induction on $k$:
\begin{description}
	\item[$k=0$]  The only atom is $1$ which is assigned $\brk<
	\emptyset, \Set{0}>$;

	\item[$k=i+1$]  If $a\in\mathcal B_{i}$ is an atom and 
	$R(a)=\brk<R_{0}, R_{1}>$ then
	\begin{align*}
		R(a\meet s_{k})&=\brk<R_{0}\cup\Set{k}, R_{1}>\\
		R(a\meet \comp t_{k})&=\brk<R_{0}, R_{1}\cup\Set{k}>\\
		R(a\meet(\comp s_{k}\meet t_{k}))&=R(a).
	\end{align*}
	
	For the new atom $R(t_{0}\meet\bigwedge_{j=1}^{i}\comp 
	s_{i})=\brk<\Set{k}, \emptyset>$.
\end{description}

Note that the function $R$ is not onto,  but we do not need it to be.

\begin{lem}\label{lem:Rshape}
	If $R(a)=\brk<R_{0}, R_{1}>$ then
	\begin{enumerate}[(a)]
		\item 
	\begin{tabular}[t]{>{$}r<{$}@{=}>{$}l<{$}}
		R_{0}&\Set{j | a\le s_{j}}\\
		R_{1}&\Set{j | a\le \comp t_{j}}
	\end{tabular}
	\vspace{4pt}
	\item If $j=\min(R_{0}\cup R_{1})$ then  
	$$
	a=(t_{0}\meet\bigwedge_{i=0}^{j-1}\comp s_{i})\meet
	\bigwedge_{p\in R_{0}\setminus(j+1)}s_{p}\meet
	\bigwedge_{q\in R_{1}}\comp t_{q}\meet
	\bigwedge_{
	\substack{r>j\\
	r\notin R_{0}\cup R_{1}}}(\comp s_{r}\meet t_{r}).
	$$
	\end{enumerate}
\end{lem}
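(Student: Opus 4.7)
The plan is to prove both parts by induction on $k$, mirroring the inductive enumeration of the atoms of $\mathcal{B}_k$ from \lemref{lem:atomSplit} and \lemref{lem:newAtom}. The base case $k=0$ is direct: the only atom is $1$ with $R(1)=\brk<\emptyset,\Set{0}>$, and since $s_0=0$ and $t_0=0$ in $\mathcal{B}_0$, both (a) and (b) are verified by inspection (all product factors in (b) collapse to $1$).

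For the inductive step, every atom of $\mathcal{B}_k$ either is the new atom $a_k=t_0\meet\bigwedge_{i=1}^{k-1}\comp s_i$, or has the form $b\meet x$ for some atom $b$ of $\mathcal{B}_{k-1}$ and some $x\in\Set{s_k,\comp t_k,\comp s_k\meet t_k}$. The new atom is handled directly using the facts already established inside \lemref{lem:newAtom}: namely $a_k\le s_k$, $a_k\le t_i$ for $0\le i\le k$, and $a_k\le\comp s_i$ for $0\le i\le k-1$. Together these yield $R_0=\Set{k}$ and $R_1=\emptyset$, matching the definition of $R(a_k)$; and with $j=k$ the formula of part (b) collapses to the defining expression for $a_k$ (using $\comp s_0=1$ in $\mathcal{B}_k$).

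For the three refinements, the essential fact is that for $j<k$ the generators $s_j,t_j$ do not involve $s_k,t_k$, so an inequality $b\le s_j$ (or $b\le\comp t_j$) in $\mathcal{B}_{k-1}$ is equivalent to the corresponding inequality $b\meet x\le s_j$ in $\mathcal{B}_k$ for each of the three choices of $x$, because $b$ is nonzero in $\mathcal{B}_k$ by \lemref{lem:atomSplit} and decomposes as the disjoint join of its three refinements. Combined with the trivial observations $b\meet s_k\le s_k$ (but not $\le\comp t_k$), $b\meet\comp t_k\le\comp t_k$ (but not $\le s_k$), and $b\meet(\comp s_k\meet t_k)\not\le s_k,\comp t_k$, the sets $R_0,R_1$ update in exactly the way prescribed by the recursive definition of $R$, establishing part (a). Part (b) then follows by substituting the inductive formula for $b$ and observing that the extra factor $x$ lands precisely in the correct one of the three product slots $\bigwedge_{p\in R_0\setminus(j+1)}s_p$, $\bigwedge_{q\in R_1}\comp t_q$, or $\bigwedge_{r>j,\,r\notin R_0\cup R_1}(\comp s_r\meet t_r)$, while $j=\min(R_0\cup R_1)$ is unchanged because $k>j$.

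The main obstacle is the subtlety that $\mathcal{B}_{k-1}$ does not embed into $\mathcal{B}_k$ in the obvious way: the relation $Q_{k-1}$ is relaxed to $Q_k$ in the transition, so elements that were zero in $\mathcal{B}_{k-1}$ can become nonzero in $\mathcal{B}_k$ (producing precisely the new atom $a_k$). To keep the bookkeeping clean I would work throughout with the canonical representatives of atoms as atoms $\bigwedge_i\varepsilon_is_i\meet\bigwedge_i\delta_it_i$ in the free Boolean algebra $\mathcal{F}_k$; the values $\varepsilon_j=+1$ and $\delta_j=-1$ are purely formal, transfer unchanged from $\mathcal{F}_{k-1}$ to $\mathcal{F}_k$, and precisely encode the memberships $j\in R_0$ and $j\in R_1$, which is what allows the induction to go through despite the non-embedding.
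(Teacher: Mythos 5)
Your proof follows essentially the same route as the paper's: induction on $k$, treating the new atom $a_k$ directly via the facts from \lemref{lem:newAtom} and handling the three refinements $b\meet s_k$, $b\meet\comp t_k$, $b\meet(\comp s_k\meet t_k)$ by showing the memberships $a\le s_j$, $a\le\comp t_j$ for $j<k$ transfer from $\mathcal B_{k-1}$ to $\mathcal B_k$ (using \lemref{lem:atomSplit} and canonical representatives in $\mathcal F_k$), exactly as the paper does. The only wrinkle is your base-case remark that all factors of (b) ``collapse to $1$'': when $\min(R_0\cup R_1)=0$ the displayed formula literally contains both $t_0$ and $\comp t_0$ and so needs the $t_0$-prefix dropped, but this is an edge case in the statement itself that the paper's own proof glosses over in the same way.
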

\begin{proof}
	This is true for all atoms in $\mathcal B_{0}$ as $1=\comp t_{0}$ is 
	the only atom and $R(1)=\brk<\emptyset, \Set{0}>$.
	
	Suppose that both (a) and (b) are true for all atoms in $\mathcal B_{k}$ and let 
	$a$ be an atom of $\mathcal B_{k+1}$. 
	
	If $a=t_{0}\meet\bigwedge_{i=0}^{k}\comp s_{i}$ is the new atom then 
	we have (by the R-rules) that $a\le t_{i}$ for all $0\le i\le k+1$, 
	and $a\le s_{k+1}$ -- by the Q-rule. Clearly also we have 
	$a\le \comp s_{i}$ for all $0\le i\le k$ so that
	\begin{align*}
		\Set{j | a\le s_{j}}&=\Set{k+1}\\
		\Set{j | a\le \comp t_{j}}&=\emptyset
	\end{align*}
	so that $R(a)$ is as asserted.
	
	It is clear that $a$ has the desired form.
	
	If $a=a'\meet s_{k+1}$ and $a'$ is a $\mathcal B_{k}$-atom which we 
	assume is expressed as in (b). We 
	know inductively that $R(a')$ is as asserted and $a\le s_{k+1}$ and 
	$a\le a'$ so that 
	\begin{align*}
		R_{0}&\subseteq\Set{j | a\le s_{j}}\\
		R_{1}&\subseteq\Set{j | a\le \comp t_{j}}.
	\end{align*}
	We also know that if $j\notin R_{0}$ and $j\le k$ then $a'\meet 
	s_{j}=0$ in $\mathcal B_{k}$ and this is preserved in $\mathcal 
	B_{k+1}$. Likewise with $j\notin R_{1}$. Thus we get the desired 
	equalities.
	
	A similar argument works if $a=a'\meet\comp t_{k+1}$.
	
	If $a=a'\meet(\comp s_{k+1}\meet t_{k+1})$ then $a\nleq s_{k+1}$ and
	$a\nleq\comp t_{k+1}$ so the sets do not change.
\end{proof}

\begin{cor}
	Let $a, b$ be atoms in $\mathcal B_{k}$ such that $R(a)=R(b)$. Then
	$a=b$.
\end{cor}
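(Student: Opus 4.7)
The plan is to invoke Lemma \ref{lem:Rshape}(b) essentially verbatim. That lemma furnishes an explicit expression for an atom $a$ purely in terms of the data $R(a) = \langle R_0, R_1\rangle$: once we know $j = \min(R_0 \cup R_1)$, the atom is reconstructed as a meet built from $t_0$, the complements $\comp s_i$ for $i<j$, the generators $s_p$ for $p \in R_0\setminus\{j\}$, the complements $\comp t_q$ for $q \in R_1$, and the terms $\comp s_r \meet t_r$ for the remaining indices $r > j$. Since this reconstruction depends only on $R(a)$, the hypothesis $R(a) = R(b)$ forces the two expressions to agree term-by-term, giving $a = b$.

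The only thing to check before quoting the formula is that $\min(R_0 \cup R_1)$ is well-defined, i.e., that $R_0\cup R_1 \neq \emptyset$ for every atom. This is immediate from the recursive definition of $R$: the base case gives $R(1) = \langle\emptyset, \{0\}\rangle$, each inductive clause either preserves $R$ or inserts $k$ into one of the two sides, and the new atom at stage $k$ receives $\langle\{k\}, \emptyset\rangle$. Thus nonemptiness of $R_0\cup R_1$ is preserved at every stage, so $j$ is always defined and the formula of Lemma \ref{lem:Rshape}(b) applies. There is no genuine obstacle here: the corollary is simply the observation that the previous lemma's construction already exhibits $R$ as injective on atoms.
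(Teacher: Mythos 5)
Your proof is correct and matches the paper's own argument, which likewise deduces the corollary directly from Lemma~\ref{lem:Rshape}(b): the explicit formula there reconstructs the atom from $R(a)$ alone, so $R$ is injective on atoms. Your additional check that $R_0\cup R_1\neq\emptyset$ is a reasonable bit of extra care but does not change the approach.
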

\begin{proof}
	This is clear from part (b) of the lemma.
\end{proof}

Now we turn to looking at getting atoms assigned to intervals in 
$\mathcal L(X)$. 

\begin{defn}
	Let $\brk<A_{0}, A_{1}>\in\rsf S(k)$. We define the interval
	$$
	J(A_{0}, A_{1})=\bigvee_{j\in A_{0}}I_{j}\join\bigvee_{l\in 
	A_{1}}\Delta(\mathbf 1, I_{l}).
	$$
\end{defn}

We note that $J(A_{0}, A_{1})$ is in $\mathcal F_{k}$. 

\begin{lem}
	Let $a$ be an atom of $\mathcal B_{k}$. Then 
	$a$ is left-associated with $J(R(a))$.
\end{lem}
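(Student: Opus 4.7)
The plan is to simply unpack the left endpoint of $J(R(a))$ in $\rsf I(\mathcal B_k)$ and appeal to part (a) of Lemma \ref{lem:Rshape}.

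Write $R(a)=\brk<R_0,R_1>$. Since $\Delta(\mathbf 1,[s_l,t_l])=[\comp t_l,\comp s_l]$ by the formula in Example \ref{def:intAlg}, and since joins of intervals are computed coordinate-wise as $[a,b]\join[c,d]=[a\meet c,b\join d]$, iterating gives
$$
J(R_0,R_1)=\Bigl[\bigwedge_{j\in R_0}s_j\meet\bigwedge_{l\in R_1}\comp t_l,\;\bigvee_{j\in R_0}t_j\join\bigvee_{l\in R_1}\comp s_l\Bigr].
$$
Left-association with this interval is just the assertion that $a$ lies below the displayed left endpoint.

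By Lemma \ref{lem:Rshape}(a), $R_0=\Set{j | a\le s_j}$ and $R_1=\Set{j | a\le\comp t_j}$. So $a\le s_j$ for each $j\in R_0$ and $a\le\comp t_l$ for each $l\in R_1$; taking the meet gives $a\le\bigwedge_{j\in R_0}s_j\meet\bigwedge_{l\in R_1}\comp t_l$, as required.

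There is essentially no obstacle, only a small bookkeeping point worth noting: the formula above uses the convention that the empty meet is $\mathbf 1$ and the empty join is $\mathbf 0$, which is only innocuous provided $R_0\cup R_1\neq\emptyset$ so that $J(R(a))$ is genuinely built as a nonempty join. An easy induction on $k$ using the definition of $R$ verifies this: at $k=0$ one has $R(1)=\brk<\emptyset,\Set{0}>$, the new atom at stage $k$ gets $\brk<\Set{k},\emptyset>$, and the three splitting rules only enlarge or preserve $R_0\cup R_1$.
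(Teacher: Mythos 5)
Your proof is correct and is essentially the paper's own argument: the paper simply says the lemma is immediate from part (a) of \lemref{lem:Rshape}, and you have spelled out the coordinate-wise computation of the left endpoint of $J(R_0,R_1)$ that makes this immediate. Your extra check that $R_0\cup R_1\neq\emptyset$ (so the join defining $J(R(a))$ is nonempty) is a reasonable piece of bookkeeping the paper leaves implicit, not a different approach.
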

\begin{proof}
	This is immediate from \lemref{lem:Rshape}(a).
	\begin{comment}
		This is by induction on $k$. In some sense this proof follows $a$'s 
		path through the tree that $a$ is on. 
		
		For $k=0$ we have $a=1$ and $J(R(a))= J(\brk<\emptyset, \Set{0}>)=
		\Delta(\mathbf 1, I_{0})= [1, 1]$.  Clearly $1\le 1$ so that
		$a$ is left-associated with this interval.
		
		For $k>0$ we have two cases.
		The new atom $t_{0}\meet\bigwedge_{i=1}^{k-1}\comp s_{i}\le s_{k}$ by 
		$Q_{k}$.
		
		Otherwise let $a$ be our atom and $R(a)=\brk<R_{0}, R_{1}>$. 
		We need to show that $a$ is 
		left-associated with $I_{l}$ for $l\in R_{0}$ and with $\Delta(\mathbf 1, 
			I_{l})$ for $l\in R_{1}$.
		
		Let $R_{0}\cup R_{1}$ be increasingly enumerated by $j_{1}<\dots<j_{p}$.
	
		If $j_{1}=0$ then $j_{1}\in R_{1}$ and let $a_{1}=\comp t_{0}$.
		By the definition of $R$ we have $a\le a_{1}\le\comp t_{0}$ -- the 
		left endpoint of $\Delta(\mathbf 1, I_{0})$.
		
		If $j_{1}>0$ then $j_{1}\in A_{0}$ and $a\le 
		a_{1}=t_{0}\meet\bigwedge_{i=1}^{j_{1}-1}\comp s_{i}$. In $\mathcal 
		B_{j_{1}}$ we have $a_{1}$ below the left endpoint of $I_{j_{1}}$ and 
		at the next stage we meet $a_{1}$ with $s_{j_{1}}$ and we still have 
		$a\le a_{1}\meet s_{j_{1}}$.
		
		Given that we have $a\le a_{i}$ we define 
		$$
		a_{i+1}=
		\begin{cases}
				 a_{i}\meet s_{j_{i+1}} & \text{ if }j_{i+1}\in A_{0} \\
				 a_{i}\meet \comp t_{j_{i+1}} & \text{ if }j_{i+1}\in A_{1} .
			\end{cases}
		$$
		Then we still have $a\le a_{i+1}$ is below the left endpoint of the 
		appropriate interval.
	\end{comment}
\end{proof}

\begin{lem}\label{lem:assocRR}
	An atom $a$ is associated with an interval of the form $J(A_{0}, A_{1})$
	iff $R(a)\le \brk<A_{0}, A_{1}>$ or $R(a)\le\brk<A_{1}, A_{0}>$.
\end{lem}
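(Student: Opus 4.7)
The plan is to unwind the definition of $J(A_0,A_1)$ into an explicit interval and then read off the characterizations of left- and right-association directly, applying part (a) of \lemref{lem:Rshape}.

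First, I would compute the endpoints of $J(A_0,A_1)$. Using $\Delta(\mathbf 1,[s_l,t_l])=[\comp t_l,\comp s_l]$ and the join formula for intervals, one obtains $J(A_0,A_1)=[x,y]$ with
\begin{align*}
x&=\bigwedge_{j\in A_0}s_j\meet\bigwedge_{l\in A_1}\comp t_l,\\
y&=\bigvee_{j\in A_0}t_j\join\bigvee_{l\in A_1}\comp s_l,
\end{align*}
so that $\comp y=\bigwedge_{j\in A_0}\comp t_j\meet\bigwedge_{l\in A_1}s_l$.

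Next, I would handle left-association. By definition $a$ is left-associated with $J(A_0,A_1)$ iff $a\le x$, which is equivalent to $a\le s_j$ for every $j\in A_0$ and $a\le\comp t_l$ for every $l\in A_1$. Writing $R(a)=\brk<R_0,R_1>$ and invoking \lemref{lem:Rshape}(a), which identifies $R_0=\Set{j\mid a\le s_j}$ and $R_1=\Set{j\mid a\le\comp t_j}$, this is exactly $A_0\subseteq R_0$ and $A_1\subseteq R_1$. Translating through the signed-set/interval isomorphism of \defref{def:signedSet}, this is precisely $R(a)\le\brk<A_0,A_1>$. Symmetrically, right-association is $a\le\comp y$, which unpacks to $A_0\subseteq R_1$ and $A_1\subseteq R_0$, i.e.\ $R(a)\le\brk<A_1,A_0>$.

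Finally, since $a$ is associated iff it is left- or right-associated, combining these two equivalences yields the claim. The only thing that needs care is matching the order convention on $\rsf S(k)$: one must verify that under $\brk<A,B>\mapsto[A,X\setminus B]$, the inclusion order gives $\brk<C,D>\le\brk<A,A'>\iff A\subseteq C\text{ and }A'\subseteq D$. This is a direct check, so I expect no real obstacle beyond keeping the directions of the containments straight.
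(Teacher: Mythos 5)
Your proof is correct and rests on the same two key facts as the paper's argument: the explicit computation of $J(A_{0}, A_{1})$ as the interval $[\bigwedge_{j\in A_{0}}s_{j}\meet\bigwedge_{l\in A_{1}}\comp t_{l},\ \bigvee_{j\in A_{0}}t_{j}\join\bigvee_{l\in A_{1}}\comp s_{l}]$, and \lemref{lem:Rshape}(a) identifying $R_{0}$ and $R_{1}$ with $\Set{j | a\le s_{j}}$ and $\Set{j | a\le\comp t_{j}}$. The only difference is organizational: you run both directions as a single chain of equivalences, whereas the paper proves the two implications separately and routes one direction through \lemref{lem:assocBelow} and \lemref{lem:assocDelta}; your order-convention check on $\rsf S(k)$ is also the right one, namely $\brk<C, D>\le\brk<A, B>$ iff $A\subseteq C$ and $B\subseteq D$.
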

\begin{proof}
	Note that $\brk<A_{1}, A_{0}>=\Delta(\mathbf 1, \brk<A_{0}, A_{1}>)$.
	
	The left to right direction is clear as $a$ is associated with 
	$J(R(a))$ and $R(a)\le\brk<A_{0}, A_{1}>$ implies
	$J(\brk<A_{0}, A_{1}>)\le J(R(a))$ so we can apply \lemref{lem:assocBelow}.
	The other half follows from \lemref{lem:assocDelta}.
	
	Suppose that $a$ is associated with $J(A_{0}, A_{1})$. We may assume 
	that $a$ is left-associated -- otherwise use $\Delta(\mathbf 1, 
	J(A_{0}, A_{1}))=J(A_{1}, A_{0})$.
	
	Then we have that $a\le s_{j}$ for all $j\in A_{0}$ and $a\le\comp 
	t_{j}$ for all $j\in A_{1}$ -- by the definition of $J(A_{0}, 
	A_{1})$. Hence we have 
	\begin{align*}
		A_{0}&\subseteq\Set{j | a\le s_{j}}=R_{0}\\
		\text{ and }\qquad
		A_{1}&\subseteq\Set{j | a\le\comp t_{j}}=R_{1}.		
	\end{align*}
	Thus we have $R(a)\le\brk<A_{0}, A_{1}>$. 
\end{proof}

\begin{lem}
	Let $\brk<A_{0}, A_{1}>\in\rsf S(k)$ with $\brk<A_{0}, 
	A_{1}><\brk<\emptyset, \emptyset>$. Then there is some atom 
	$a\in\mathcal B_{k}$ such that either $R(a)=\brk<A_{0}, A_{1}>$ or
	$R(a)=\brk<A_{1}, A_{0}>$.
\end{lem}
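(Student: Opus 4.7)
The plan is to induct on $k$, exploiting the recursive definition of $R$ together with Lemma~\ref{lem:atomSplit} (every atom of $\mathcal B_{k-1}$ splits into three atoms of $\mathcal B_k$) and Lemma~\ref{lem:newAtom} (the one new atom $a_k = t_0\meet\bigwedge_{i=1}^{k-1}\comp s_i$). The base case $k=0$ is immediate: $\rsf S(0)$ has exactly two nonempty elements $\brk<\{0\},\emptyset>$ and $\brk<\emptyset,\{0\}>$, and the unique atom $1\in\mathcal B_0$ has $R(1)=\brk<\emptyset,\{0\}>$, which matches either choice up to swap.

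For the inductive step, assume the lemma holds in $\mathcal B_{k-1}$ and fix a nonempty $\brk<A_0,A_1>\in\rsf S(k)$. Set $B_i=A_i\cap\{0,\dots,k-1\}$. If $B_0\cup B_1=\emptyset$ then $\brk<A_0,A_1>$ is either $\brk<\{k\},\emptyset>$ or $\brk<\emptyset,\{k\}>$, and the new atom $a_k$ satisfies $R(a_k)=\brk<\{k\},\emptyset>$ by the definition of $R$, so one of the two required equalities holds. Otherwise apply the inductive hypothesis to produce an atom $a'\in\mathcal B_{k-1}$ with $R(a')\in\{\brk<B_0,B_1>,\brk<B_1,B_0>\}$, and then pick the appropriate split out of $a'\meet s_k$, $a'\meet\comp t_k$, $a'\meet(\comp s_k\meet t_k)$: by the recursion defining $R$, these yield $R$-values obtained from $R(a')$ by adjoining $k$ to the first coordinate, adjoining $k$ to the second coordinate, or leaving it unchanged, respectively. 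All three are atoms of $\mathcal B_k$ by Lemma~\ref{lem:atomSplit}.

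The case analysis is then routine. If $k\notin A_0\cup A_1$ the invariant split $a'\meet(\comp s_k\meet t_k)$ transports $R(a')$ unchanged and so gives $\brk<A_0,A_1>$ or its swap. If $k\in A_0$ (and hence $k\notin A_1$ by disjointness), then from $R(a')=\brk<B_0,B_1>$ the split $a'\meet s_k$ produces $\brk<B_0\cup\{k\},B_1>=\brk<A_0,A_1>$, while from the swapped $R(a')=\brk<B_1,B_0>$ the split $a'\meet\comp t_k$ produces $\brk<B_1,B_0\cup\{k\}>=\brk<A_1,A_0>$; the case $k\in A_1$ is symmetric.

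The only place that needs care is this last bookkeeping in the inductive step, since the swap appears both in the hypothesis and in the conclusion and one must check that the chosen split lands in the right one of the two admissible $R$-values. The calculation above shows that in every sub-case at least one of the three splits works, so the induction goes through.
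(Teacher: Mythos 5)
Your proof is correct, but it takes a genuinely different route from the paper's. The paper does not induct on $k$: it writes the required atom down explicitly in closed form, in two cases according to whether $\min(A_{0}\cup A_{1})$ is $0$ (after a possible swap, forced to lie in $A_{1}$) or positive (after a possible swap, forced to lie in $A_{0}$), using the shape of atoms established in \lemref{lem:Rshape}(b), and then argues directly that the resulting element is nonzero because none of the generators of the quotienting ideal can sit above it. Your induction on $k$ instead leans on \lemref{lem:newAtom} and \lemref{lem:atomSplit} to guarantee nonvanishing for free, and on the recursion defining $R$ to steer the $R$-value; the only delicate point is the swap bookkeeping in the step, which you handle correctly (from $R(a')=\langle B_{0},B_{1}\rangle$ take the split $a'\meet s_{k}$ when $k\in A_{0}$, from $R(a')=\langle B_{1},B_{0}\rangle$ take $a'\meet\comp t_{k}$, and symmetrically for $k\in A_{1}$). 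What the paper's construction buys is an explicit formula for the witnessing atom, at the cost of re-verifying nonzero-ness against the ideal; what your induction buys is that all the nontrivial algebra is already packaged in the splitting lemmas, at the cost of losing the explicit description and of the case analysis on where $k$ lands. One small remark: your base case says $R(1)$ ``matches either choice up to swap''; it is cleaner to say that $R(1)=\langle\emptyset,\{0\}\rangle$ equals $\langle A_{0},A_{1}\rangle$ in one case and $\langle A_{1},A_{0}\rangle$ in the other, which is exactly the disjunction the lemma asserts.
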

\begin{proof}
	Let $A_{0}\cup A_{1}$ be enumerated as $j_{1}<j_{2}<\dots<j_{n}$. 
	\begin{description}
			\item[$j_{1}=0$]  Then we will assume that $j_{1}=0\in A_{1}$ -- 
			else switch the order. Then we define 
			$$
			a=\bigwedge_{j\in A_{0}}s_{j}\meet\bigwedge_{j\in A_{1}}\comp 
			t_{j}\meet\bigwedge_{j\notin A_{0}\cup A_{1}}(\comp s_{j}\meet t_{j}).
			$$
		This is a non-zero atom as it is below $\comp t_{0}$ and nothing 
		gets killed here except by the rules $s_{i}\le t_{i}$.
		Clearly also we have 
		\begin{align*}
			\Set{j | a\le s_{j}}&=A_{0}\\
			\text{ and }\qquad
			\Set{j | a\le\comp t_{j}}&=A_{1}	
		\end{align*}
		so that $R(a)=\brk<A_{0}, A_{1}>$.
		
			\item[$j_{1}>0$]  Then we will assume that $j_{1}\in A_{0}$ -- 
			else switch the order. Then we define 
			$$
			a=(t_{0}\meet\bigwedge_{i=0}^{j_{1}-1}\comp s_{i})\meet
	\bigwedge_{p\in R_{0}\setminus(j_{1}+1)}s_{p}\meet
	\bigwedge_{q\in R_{1}}\comp t_{q}\meet
	\bigwedge_{
	\substack{r>j_{1}\\
	r\notin R_{0}\cup R_{1}}}(\comp s_{r}\meet t_{r}).
			$$
			This is nonzero as nothing below $t_{0}\meet\bigwedge_{i=0}^{j_{1}-1}\comp s_{i}$
			gets killed in any $\mathcal B_{j}$ for $j>j_{1}$.
			Clearly also we have 
		\begin{align*}
			\Set{j | a\le s_{j}}&=A_{0}\\
			\text{ and }\qquad
			\Set{j | a\le\comp t_{j}}&=A_{1}	
		\end{align*}
		so that $R(a)=\brk<A_{0}, A_{1}>$.
		\end{description}
\end{proof}

\begin{comment}
Putting these together we have 
\begin{lem}
	$$
	\card{\Set{a | a\text{ is associated with }J(A_{0}, A_{1})}}=
	\card{\Set{\brk<B_{0}, B_{1}> | \brk<B_{0}, B_{1}>\le\brk<A_{0}, A_{1}>}}.
	$$
\end{lem}
\begin{proof}
	We define a function
	$$
	\phi\colon\Set{a | a\text{ is associated with }J(A_{0}, A_{1})}\to
	\Set{\brk<B_{0}, B_{1}> | \brk<B_{0}, B_{1}>\le\brk<A_{0}, A_{1}>}
	$$
	by
	$$
		\phi(a)=
		\begin{cases}
			R(a) & \text{ if }R(a)\le\brk<A_{0}, A_{1}> \\
			\Delta(\mathbf 1, R(a)) & \text{ if }R(a)\le\brk<A_{1}, A_{0}>. 
		\end{cases}
	$$
	Then $\phi$ is one-one since $R(a)=R(b)$ implies $a=b$, and 
	$R(a)$ cannot equal $\Delta(\mathbf 1, R(b))$ as either
	$0$ can be in $R(a)_{0}\cup R(a)_{1}$ only if it is in $R(a)_{1}$ 
	and if $j=\min(R(a)_{0}\cup R(a)_{1})>0$ then $j\in R(a)_{0}$.
	Since $\Delta(\mathbf 1, \bullet)$ reverse this we see that equality 
	is impossible.
	
	$\phi$ is onto by the last lemma. 
\end{proof}
\end{comment}

Now we can prove that the desired assignments exist. 
\begin{lem}
	Let $a$ be any atom. Then $a$ is cubically assigned to $J(R(a))$.
\end{lem}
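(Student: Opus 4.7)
By the preceding transfer lemma, it is enough to exhibit $a$ as cubically assigned to \emph{any} single interval $[u,v]\in\mathcal F_{k}$ with which $a$ is associated; the transfer lemma then yields the assignment to $J(R(a))$. Combined with \lemref{lem:assocRR}, it therefore suffices to find some $j\in R(a)_{0}$ for which $a$ is cubically assigned to $I_{j}$, or some $j\in R(a)_{1}$ for which $a$ is cubically assigned to $\Delta(\mathbf 1,I_{j})$. I would split into two cases according to whether $a\le\comp t_{0}$ or $a\le t_{0}$.

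\emph{Case 1: $a\le\comp t_{0}$.} Then $0\in R(a)_{1}$, and $a$ is right-associated with $I_{0}=[0,t_{0}]$. The preceding ``easiest case'' lemma has already shown that $[0, t_{0}\join a]\in\mathcal F_{k}$ for every atom $a\le\comp t_{0}$. This \emph{is} the cubic assignment of $a$ to $I_{0}$, and transfer finishes this case.

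\emph{Case 2: $a\le t_{0}$.} Then $0\notin R(a)_{0}\cup R(a)_{1}$, but the defining relation $Q_{k}$ forces $t_{0}\le\bigvee_{l=1}^{k}s_{l}$, so $a\le s_{l}$ for some $l\ge 1$; thus $R(a)_{0}\neq\emptyset$. Fix such a $j\in R(a)_{0}$; I would aim to show the atom $[s_{j}\meet\comp a, t_{j}]$ of $[I_{j},\mathbf 1]$ lies in $\mathcal F_{k}$, which realizes $a$ as cubically assigned to $I_{j}$ in the left-associated sense. The plan is to mirror the easiest-case construction: the localization $\mathcal L_{I_{j}}$ is (by \lemref{lem:kl}) an interval algebra, so the meets and complements in $[I_{j},\mathbf 1]$ needed to pinpoint the single atom $[s_{j}\meet\comp a, t_{j}]$ are implemented cubically by joins and $\Delta$'s. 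For the Boolean ``generators'' above $I_{j}$ one would use $I_{j}\join I_{l}=[s_{j}\meet s_{l}, t_{j}\join t_{l}]$ together with $xy$-combinations involving $\Delta(\mathbf 1, I_{l})$, exactly as the easy-case proof produced the generators $[0, t_{0}\join t_{l}]$ and $[0, t_{0}\join s_{l}]$ above $I_{0}$.

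The main obstacle is Case 2. In the easy case the identity $s_{0}=0$ meant that $[0,t_{0}]\join[s_{l},t_{l}]=[0,t_{0}\join t_{l}]$ gave a clean atom-above-$I_0$; for $j\ge 1$, the corresponding join $I_{j}\join I_{l}$ shrinks \emph{both} endpoints, so the cubic construction picks up spurious left-endpoint terms and one must invoke the relations $R_{l}$ and $Q_{k}$ --- which act precisely on the $[0,t_{0}]$ component of $\mathcal B_{k}$ --- to cancel them and recover $s_{j}\meet\comp a$ on the left. Carefully tracking the endpoints through each cubic operation, guided by the tree structure of $T_{ij}$ that organizes the atoms of $\mathcal B_{k}$, should make the construction go through recursively.
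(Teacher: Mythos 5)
Your reduction via the transfer lemma is legitimate, and Case 1 ($a\le\comp t_{0}$) is complete: the ``easiest case'' lemma does exhibit $[0,t_{0}\join a]$ in $\mathcal F_{k}$, which is exactly a cubic assignment of $a$ to $I_{0}$, and transfer then moves it to $J(R(a))$. But Case 2 is the entire substance of the lemma and you have not proved it --- you say you ``would aim to show'' the atom $[s_{j}\meet\comp a,t_{j}]$ lies in $\mathcal F_{k}$ and that tracking endpoints ``should make the construction go through recursively.'' That is precisely the point at which the direct imitation of the easy case breaks down, and the obstruction you half-identify is real: the only elements of $[I_{j},\mathbf 1]$ you can reach in one step from the generators are of the form $I_{j}\join I_{l}$ and $I_{j}\join\Delta(\mathbf 1,I_{l})$, each of which moves \emph{both} endpoints at once. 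Under the isomorphism $[I_{j},\mathbf 1]\simeq[0,s_{j}]\times[0,\comp t_{j}]$ these are ``diagonal'' elements, and the Boolean subalgebra generated by a family of diagonal elements of a product need not be the whole product --- so even with the full Boolean structure of $[I_{j},\mathbf 1]$ available cubically (via $\join$ and relative complementation toward $I_{j}$), it is not clear you can separate the coordinates and isolate a single atom. Invoking $R_{l}$ and $Q_{k}$ ``to cancel the spurious terms'' is not a proof; it is a restatement of the difficulty.

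The paper circumvents exactly this obstruction with an idea absent from your proposal: induction on the rank of $R(a)$ in $\rsf S(k)$. By \lemref{lem:assocRR}, the atoms associated with $J(R(a))$ are precisely $a$ together with those $b$ whose $R$-value is \emph{strictly} below $\brk<R_{0},R_{1}>$ or $\brk<R_{1},R_{0}>$; by the inductive hypothesis and the transfer lemma each such $b$ is already cubically assigned to $J(R(a))$, so all atoms of the Boolean algebra $[J(R(a)),\mathbf 1]$ except the one coming from $a$ are in $\mathcal F_{k}$, and the last one is recovered as the relative complement $(\bigvee_{b\not=a}x_{b})\rightarrow J(R(a))$. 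In other words, one never constructs the atom for $a$ directly; one constructs all the \emph{other} atoms of a carefully chosen interval and complements. You need either to supply this counting-plus-complementation argument (or an equivalent induction) or to actually carry out the endpoint bookkeeping you defer; as written, Case 2 is a gap.
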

\begin{proof}
	The proof is by induction on the rank of $R(a)=\brk<R_{0}, R_{1}>$ 
	in $\rsf S(k)$. 
	
	\begin{description}
			\item[Rank $0$]  Then there is only one atom associated with
			$J(R(a))$ and in particular the interval $[J(R(a)), \mathbf 1]$ 
			has only two elements, and it's unique atom is $[0, 1]$. Since 
			this atom has to come from $a$ we see that $a$ is cubically 
			assigned to $J(R(a))$.
		
			\item[Rank $>0$]  Then for every $\brk<B_{0}, B_{1}><J(R(a))$ there is an 
			atom $b$ such that $R(b)$ is equal to either $\brk<B_{0}, B_{1}>$ 
			or $\brk<B_{1}, B_{0}>$ and is therefore associated with 
			and cubically assigned to $J(B_{0}, B_{1})$. 
			By \lemref{lem:assocRR} $a$ is the only other atom associated with
			$J(R(a))$. 
			
			For each atom $b$ associated with $J(R(a))$ let
			$x_{b}\in[J(R(a)), \mathbf 1]$ be the corresponding atom. 
			Then by induction we cubically have $x_{b}$ for all $b\not=a$ and hence
			$$
			x_{a}=\left(\bigvee_{b\not=a}x_{b}\right)\rightarrow J(R(a))
			$$
			is cubically generated. Thus $a$ is cubically assigned to $J(R(a))$.
		\end{description}
\end{proof}

\begin{thm}
	$$
	\mathcal L(X)\simeq \mathcal Fr(X)
	$$
\end{thm}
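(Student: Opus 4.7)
The plan is to exploit the embedding $\mathcal Fr(X) \hookrightarrow \mathcal L(X)$ established in Section 2, where each generator maps to $I_i = [s_i, t_i]$, and to show that this embedding is surjective. Let $\mathcal F_k$ denote the cubic subalgebra of $\mathcal L(X)$ generated by $\Set{I_i | 0 \le i \le k}$. Then $\mathcal F_k$ is contained in the image of $\mathcal Fr(X)$, which in turn sits inside $\mathcal L(X)$, so it is enough to prove $\mathcal F_k = \mathcal L(X)$.

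Since $\mathcal L(X) = \bigcup_{i=0}^{k} \mathcal L^{(k)}_{I_i}$, I fix $i$ and argue $\mathcal L^{(k)}_{I_i} \subseteq \mathcal F_k$. By \lemref{lem:kl} the localization $\mathcal L^{(k)}_{I_i}$ is isomorphic to $\rsf I([I_i, \mathbf 1])$, so every element is a $\Delta$-image of a pair from $[I_i, \mathbf 1]$, and every element of $[I_i, \mathbf 1]$ is a join of atoms of that upper segment. Thus it suffices to show that every atom of $[I_i, \mathbf 1]$ lies in $\mathcal F_k$, because $\mathcal F_k$ is closed under joins and $\Delta$.

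Each atom of $[I_i, \mathbf 1]$ corresponds to an atom $a \in \mathcal B_k$ associated with $I_i$ in the sense of the previous section: either $a \le s_i$ and the atom is $[s_i \meet \comp a, t_i]$, or $a \le \comp t_i$ and the atom is $[s_i, t_i \join a]$. By the preceding lemmas every atom $a$ is cubically assigned to $J(R(a))$, and cubic assignment transfers freely between any two intervals in $\mathcal F_k$ with which $a$ is associated. Since $I_i = J(\brk<\Set{i}, \emptyset>)$ already lies in $\mathcal F_k$, and since $a$ is assumed to be associated with $I_i$, the transfer lemma delivers $a$ cubically assigned to $I_i$. Hence the corresponding atom of $[I_i, \mathbf 1]$ lies in $\mathcal F_k$, and closing under joins and $\Delta$ yields $\mathcal L^{(k)}_{I_i} \subseteq \mathcal F_k$.

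The chief obstacle I foresee is bookkeeping: one must verify that the two flavors of association (left versus right) track coherently through the functions $R$, $J$, and the involution $\Delta(\mathbf 1, \cdot)$, so that the atom of $[I_i, \mathbf 1]$ produced by $a$ really is the one delivered by the cubic-assignment machinery rather than its $\Delta$-dual. Granted that compatibility (which follows from \lemref{lem:assocDelta} and \lemref{lem:assocRR}), the chain of assignment lemmas applied coordinate-by-coordinate across the union $\bigcup_{i} \mathcal L^{(k)}_{I_i}$ forces $\mathcal F_k = \mathcal L(X)$, and combined with the embedding of Section 2 this gives $\mathcal L(X) \simeq \mathcal Fr(X)$.
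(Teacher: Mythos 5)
Your proposal is correct and follows essentially the same route as the paper: reduce surjectivity of the embedding to showing the atoms of each $[I_{i}, \mathbf 1]$ are cubically generated, then combine the lemma that every atom $a$ is cubically assigned to $J(R(a))\in\mathcal F_{k}$ with the transfer lemma to move that assignment to $I_{i}=J(\Set{i}, \emptyset)$. The bookkeeping about left versus right association that you flag is exactly what \lemref{lem:assocDelta} and \lemref{lem:assocRR} handle, so there is no gap.
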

\begin{proof}
	Since we have $\mathcal Fr(X)$ embeds into $\mathcal L(X)$
	and every atom $a$ in $\mathcal B_{k}$ is cubically assigned to 
	$J(R(a))\in\mathcal L(X)$ we see that every element of
	$\mathcal L(X)$ is generated by the intervals $I_{i}$. 
	Hence the embedding must be onto.
\end{proof}

This completes our description of free cubic implication algebras. One nice 
consequence of this result is that if 
$\mathcal M$ is an MR-algebra and $X\subseteq \mathcal M$ is any set, 
then the cubic subalgebra generated by $X$ is upwards closed in the 
MR-subalgebra generated by $X$. This is true because our result shows 
that it is true for finite free algebras. 

\begin{bibdiv}
\begin{biblist}
    \DefineName{cgb}{Bailey, Colin G.}
    \DefineName{jso}{Oliveira,  Joseph S.}

\bib{BO:eq}{article}{
title={An Axiomatization for Cubic Algebras}, 
author={cgb}, 
author={jso}, 
book={
    title={Mathematical Essays in Honor of Gian-Carlo Rota}, 
    editor={Sagan,  Bruce E.}, 
    editor={Stanley, Richard P.}, 
    publisher={Birkha\"user}, 
    date={1998.}, 
}, 
pages={305--334}
}

\bib{BO:UniMR}{article}{
title={A Universal Axiomatization of Metropolis-Rota Implication Algebras}, 
author={cgb}, 
author={jso},
status={in preparation} ,
eprint={arXiv:0902.0157v1 [math.CO]}
}

\bib{MR:cubes}{article}{
author={Metropolis, Nicholas}, 
author={Rota,  Gian-Carlo}, 
title={Combinatorial Structure of the faces 
of the n-Cube}, 
journal={SIAM J.Appl.Math.}, 
volume={35}, 
date={1978}, 
pages={689--694}
}
\end{biblist}
\end{bibdiv}

\end{document}